\definecolor{red}{rgb}{1,0,0}
\definecolor{blue}{rgb}{0,0,1}
\definecolor{green}{rgb}{0,.6,0}
\newtheorem{thm}{Theorem}[section]
\newtheorem{cor}[thm]{Corollary}
\newtheorem{lem}[thm]{Lemma}
\newtheorem{prop}[thm]{Proposition}
\theoremstyle{definition}
\theoremstyle{definition}
\theoremstyle{definition}
\renewcommand{\vec}[1]{\mathbf{#1}}
\newcommand{\nul}{\operatorname{null}}
\newcommand{\dist}{\operatorname{dist}}
\newcommand{\bit}{\begin{itemize}}
\newcommand{\eit}{\end{itemize}}
\newcommand{\ben}{\begin{enumerate}}
\newcommand{\een}{\end{enumerate}}
\newcommand{\beq}{\begin{equation}}
\newcommand{\eeq}{\end{equation}}
\newcommand{\bea}{\begin{eqnarray}} 
\newcommand{\eea}{\end{eqnarray}}
\newcommand{\bpf}{\begin{proof}}
\newcommand{\epf}{\end{proof}\ms}
\newcommand{\bmt}{\begin{bmatrix}}
\newcommand{\emt}{\end{bmatrix}}
\newcommand{\ms}{\medskip}
\newcommand{\beqs}{\begin{equation*}} % * means no number
\newcommand{\eeqs}{\end{equation*}}
\newcommand{\beas}{\begin{eqnarray*}}
\newcommand{\eeas}{\end{eqnarray*}}
\newcommand{\spa}{\operatorname{span}}
\newcommand{\nonback}{B}
\newcommand{\glue}[2]{\hspace{-3pt}\tensor[_#1]{\circ}{_#2}\hspace{-2pt}}
\tikzstyle{vertex}=[draw,thick,fill=white,circle,inner sep=2pt]
\title{Defective eigenvalues of the non-backtracking matrix}
\author{
\and Kristin Heysse
\thanks{Department of Mathematics, Statistics, and Computer Science, Macalester College, St.\ Paul, MN, (kheysse@macalester.edu)}
\and Kate Lorenzen
\thanks{Department of Mathematics and Computer Science, Linfield University, McMinnville, OR, USA (klorenzen@linfield.edu)}
\and Carolyn Reinhart
\thanks{Department of Mathematics and Statistics, Swarthmore College, Swarthmore, PA, USA (creinha1@swarthmore.edu)}}
\date{\today}
\begin{document}

\maketitle
\begin{abstract}
    We consider graphs for which the non-backtracking matrix has defective eigenvalues, or graphs for which the matrix does not have a full set of eigenvectors. The existence of these values results in Jordan blocks of size greater than one, which we call nontrivial. We show a relationship between the eigenspaces of the non-backtracking matrix and the eigenspaces of a smaller matrix, completely classifying their differences among graphs with at most one cycle. Finally, we provide several constructions of infinite graph families that have nontrivial Jordan blocks for both this smaller matrix and the non-backtracking matrix.  
\end{abstract}

\section{Introduction}

A non-backtracking walk in a graph is any traversal of the vertices of a graph such that no edge is immediately repeated. The non-backtracking matrix encodes if two edges can be traversed in succession in a non-backtracking walk.
This matrix was originally introduced by Hashimoto in 1989 \cite{Hashimoto}. It has been used to study percolation \cite{Caletal}, community detection \cite{Betal}, and non-recurrent epidemic spread \cite{PSetal}. 

Of particular interest is the eigen-information of the non-backtracking matrix. The largest eigenvalue is related to the epidemic threshold of the SIR model \cite{PSetal}, while the eigenvectors have been used to rank importance of nodes in networks \cite{Martinetal}. Non-backtracking walks are also a better tool for community detection by spectral clustering in sparse networks. Developers of these spectral clustering algorithms commend the spectrum of the non-backtracking matrix for its ability to maintain a large gap between bulk eigenvalues and the eigenvalues related to community detection \cite{clustering}.

The non-backtracking matrix is not symmetric, making it one of the only well-studied graph matrices where there may not be a full set of linearly independent eigenvectors.
When the algebraic and geometric multiplicity of an eigenvalue are not equal, the eigenvalue is called \emph{defective} and the matrix is not diagonalizable. Graphs that have a defective eigenvalue we will call \emph{defective graphs}.
We look to answer the question, \textit{which graphs are defective for the non-backtracking matrix?}
Torres showed in \cite{Torres} that if a graph has a vertex of degree one, then the non-backtracking matrix of the graph will have a defective eigenvalue of $\lambda=0$. As such, the question can be rephrased: \textit{which graphs of minimum degree two are defective for the non-backtracking matrix?} It was conjectured by Torres in \cite{Torres} that the answer is never.

We show that this conjecture is false by providing a constructive method to build three infinite graph families with a defective eigenvalue (see Section \ref{sec: fams}). In some cases, this defective eigenvalue is real. In addition, we provide some numerical data regarding the number of graphs on 10 or fewer vertices with defective eigenvalues and the eigenvalues for which they are defective. Our constructions use a previously studied matrix, $K$, which is defined in terms of the adjacency and degree diagonal matrices of a graph and that shares eigenvalues with $\nonback$. 

We solidify the relationship between the Jordan canonical forms of $K$ and the non-backtracking matrix by completely classifying when they are the same and describing what differences can occur (see Section \ref{sec: JCFK}). Further, we construct useful results regarding the structure of the generalized eigenvectors of the matrix $K$ for special graph structures in Section \ref{sec: structure for k}. These results are also applied to determine conditions on the generalized eigenvectors of graphs containing twin vertices. 

\subsection{Preliminaries}

For a vector $\vec{v}$, we will let $\vec{v}_i$ denote the $i$th entry of the vector. The all ones vector of dimension $n$ will be denoted $\vec{1_n}$ (or just $\vec{1}$ when the dimension is clear) and the identity matrix of dimension $n$ will be denoted $I_n$ (or just $I$ when the dimension is clear). The $i$th standard basis vector will be denoted $\vec{e_i}$. 

Let $(\lambda, \vec{u}^{(1)})$ be an eigenpair for a matrix $M$. Further, let $k$ be the largest integer such that the system $(M-\lambda I)\vec{u}^{(j+1)}=\vec{u}^{(j)}$ has a solution for all $j \in \{0,1,\ldots k-1\}$, where we define $\vec{u}^{(0)}$ to be the zero vector. It follows that $k$ is at least 1, since $\vec{u}^{(1)}$ is an eigenvector. This sequence of vectors $\vec{u}^{(1)},\vec{u}^{(2)},\ldots, \vec{u}^{(k)}$ is a \emph{Jordan chain of length $k$} of $M$, and each vector $\vec{u}^{(j)}$ is a \emph{generalized eigenvector of $M$} for eigenvalue $\lambda$. A full set of Jordan chains, when considered as columns of a matrix, serve as the similarity transformation for a matrix into its Jordan canonical form. 

The lengths of the Jordan chains for a matrix $M$ correspond to the sizes of its Jordan blocks. That is, $M$ has a Jordan chain of length $k$ for eigenvalue $\lambda$ if and only if the Jordan canonical form of $M$ contains a Jordan block of size $k$. Furthermore, the geometric multiplicity of $\lambda$ is the number of Jordan blocks corresponding to $\lambda$ and the algebraic multiplicity of $\lambda$ is the sum of the sizes of all Jordan blocks for $\lambda$. Thus, the existence of a Jordan chain of length $k\geq 2$ for $M$ is sufficient to show that $M$ does not have a full set of eigenvectors.

A \textit{(simple) graph} $G$ consists of a set of vertices $V(G)$ and a set of edges $E(G)$ such that each edge $e\in E(G)$ is a subset of two vertices. For ease of notation, $ij$ will be used to denote the edge $\{i,j\}$. We also use $i\sim j$ to denote that $i$ and $j$ are connected by an edge. The set of all vertices $j$ such that $ij\in E(G)$ is called the \emph{neighborhood} of the vertex $i$. The \textit{degree} of a vertex $i$, denoted $\deg(i)$, is the size of its neighborhood. All graphs we consider are \emph{connected}, or have walk between any two vertices. The length of the shortest walk between two vertices $i$ and $j$ is defined to be the \emph{distance} between them, denoted $\dist(i,j)$.

To define the non-backtracking matrix of a graph, it is useful to consider each edge $ij$ in a graph as a pair of directed edges $(i,j)$ and $(j,i)$. This is due to the nature of how non-backtracking walks are defined: traveling from $i$ to $j$ along the edge $ij$ will result in a different set of viable next edges for the walk than traveling from $j$ to $i$ along $ij$. As such, the \textit{non-backtracking matrix} $\nonback(G)$ is indexed by the directed edges of the graph $G$ and defined such that 
\begin{align*}
    \nonback_{(i,j),(k,m)}(G)= \begin{cases}
        1 & \text{ if } j=k \text{ and } i \neq m\\
        0 & \text{ otherwise}.
    \end{cases}
\end{align*} In other words, if we can travel along $ij$ and then immediately travel $km$ in a non-backtracking walk, the corresponding matrix entry is one. Otherwise, the matrix entry is zero. See Figure \ref{fig:pawex} for an example.

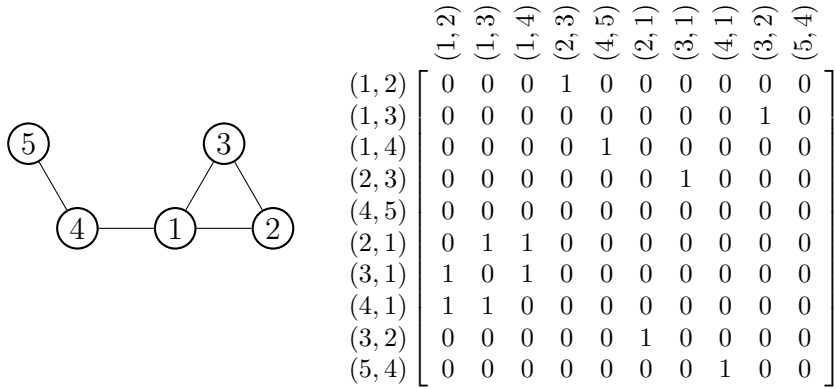
\begin{figure}[h!]
    \centering
\begin{tikzpicture}
\tikzstyle{knode}=[circle,draw=black,thick,inner sep=2pt]

\node (0) at (-1,0) [knode] {1};
\node (1) at ($(0)+(0:1.3 cm)$) [knode] {2};
\node (2) at ($(0)+(60:1.3 cm)$) [knode] {3}; 
\node (3) at ($(0)+(180: 1.3 cm)$) [knode] {4};
\node (4) at ($(3)+(120:1.3 cm)$) [knode] {5}; 

\foreach \from/\to in {0/1,1/2,2/0,0/3,3/4}
\draw (\from) -- (\to);

\node (m) at (5,0) {\footnotesize$ \left[ \begin{array}{rrrrrrrrrr}
0 & 0 & 0 & 1 & 0 & 0 & 0 & 0 & 0 & 0 \\
0 & 0 & 0 & 0 & 0 & 0 & 0 & 0 & 1 & 0 \\
0 & 0 & 0 & 0 & 1 & 0 & 0 & 0 & 0 & 0 \\
0 & 0 & 0 & 0 & 0 & 0 & 1 & 0 & 0 & 0 \\
0 & 0 & 0 & 0 & 0 & 0 & 0 & 0 & 0 & 0 \\
0 & 1 & 1 & 0 & 0 & 0 & 0 & 0 & 0 & 0 \\
1 & 0 & 1 & 0 & 0 & 0 & 0 & 0 & 0 & 0 \\
1 & 1 & 0 & 0 & 0 & 0 & 0 & 0 & 0 & 0 \\
0 & 0 & 0 & 0 & 0 & 1 & 0 & 0 & 0 & 0 \\
0 & 0 & 0 & 0 & 0 & 0 & 0 & 1 & 0 & 0
\end{array}\right]$};

\node (m) at (1.7,0) {\footnotesize $\begin{array}{r}
(1,2) \\ (1,3) \\ (1,4) \\ (2,3) \\ (4,5) \\ (2,1) \\ (3,1) \\ (4,1) \\ (3,2) \\ (5,4) \end{array}$};

\node (m) at (5,2.6) {\rotatebox{90}{\footnotesize $\begin{array}{r}
(1,2) \\[3pt] (1,3) \\[3pt] (1,4) \\[3pt] (2,3) \\[3pt] (4,5) \\[3pt]  (2,1) \\[3pt]  (3,1) \\[3pt]  (4,1) \\[3pt]   (3,2) \\[3pt] (5,4) \end{array}$}};

\end{tikzpicture}
    \caption{A graph on five vertices and its non-backtracking matrix. }
    \label{fig:pawex}
\end{figure}

The non-backtracking matrix has many eigenvalues and eigenvectors that are predictable as they correspond to cycles in a graph. Mainly, these are $\lambda=\pm1$. The spectral interest of this matrix tends to be on the remaining eigenvalues and corresponding eigenspaces. The non-backtracking matrix's spectrum can also be found using Ihara's Theorem which separates the predictable eigenvalues from the remaining ones. 

\begin{thm}[Ihara's Theorem \cite{Ihara}]
\label{thm:ihara}
For a graph $G$ with $n$ vertices and $m$ edges, let $\nonback$ be the non-backtracking matrix of $G$. Let $A$ be the adjacency matrix of $G$ and $D$ the degree diagonal matrix. Then
\begin{align*}
    \det(I-u\nonback)=(1-u^2)^{m-n}\det(u^2(D-I)-uA+I).
\end{align*}
\end{thm}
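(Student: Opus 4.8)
The plan is to give Bass's linear-algebraic proof. The idea is to express both the non-backtracking matrix $\nonback$ and the $n\times n$ polynomial matrix $u^2(D-I)-uA+I$ through the incidence structure of the $2m$ directed edges, and then compare determinants of two block factorizations of a single $(n+2m)\times(n+2m)$ matrix. Fix an orientation and let $S$ and $T$ be the $n\times 2m$ matrices with $S_{v,e}=1$ exactly when $v$ is the origin of the directed edge $e$ and $T_{v,e}=1$ exactly when $v$ is its terminus. Let $J$ be the $2m\times 2m$ permutation matrix sending each directed edge to its reverse; since a simple graph has no loops, $J$ is, in a suitable basis, a direct sum of $m$ copies of $\left(\begin{smallmatrix}0&1\\1&0\end{smallmatrix}\right)$, so $J^2=I_{2m}$ and $\det(I_{2m}+uJ)=(1-u^2)^m$. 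Reading off the definitions one checks $A=ST^\top$, $D=SS^\top=TT^\top$, and $\nonback=T^\top S-J$ (the condition $j=k$ in $\nonback_{(i,j),(k,m)}$ produces exactly $(T^\top S)_{(i,j),(k,m)}$, and subtracting $J$ deletes precisely the forbidden backtracking transitions $i=m$), together with the reversal identities $JT^\top=S^\top$ and $JS^\top=T^\top$ (left multiplication by $J$ interchanges the origin and terminus incidences).

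The heart of the argument is the block-matrix identity
\begin{align*}
&\begin{pmatrix} I_n & 0 \\ T^\top & I_{2m} \end{pmatrix}
\begin{pmatrix} (1-u^2)I_n & uS \\ 0 & I_{2m}-u\nonback \end{pmatrix} \\
&\qquad =\begin{pmatrix} I_n & uS \\ 0 & I_{2m}+uJ \end{pmatrix}
\begin{pmatrix} I_n-uA+u^2(D-I) & 0 \\ T^\top-uS^\top & I_{2m} \end{pmatrix},
\end{align*}
both products being equal to $\left(\begin{smallmatrix} (1-u^2)I_n & uS \\ (1-u^2)T^\top & I_{2m}+uJ \end{smallmatrix}\right)$. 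On the left the only nonobvious block is the lower-right one, which collapses because $uT^\top S+I_{2m}-u\nonback=I_{2m}+uJ$; on the right one uses $ST^\top=A$, $SS^\top=D$, and the reversal identities (for instance $(I_{2m}+uJ)(T^\top-uS^\top)=(1-u^2)T^\top$ and $uS(T^\top-uS^\top)=uA-u^2D$). Taking determinants and noting that each block-triangular factor with identity diagonal blocks has determinant $1$ while $\det(I_{2m}+uJ)=(1-u^2)^m$, we get
\[
(1-u^2)^n\det(I_{2m}-u\nonback)=(1-u^2)^m\det\big(I_n-uA+u^2(D-I)\big).
\]
This is an identity in the integral domain $\R[u]$, so cancelling $(1-u^2)^n$ yields $\det(I_{2m}-u\nonback)=(1-u^2)^{m-n}\det\big(u^2(D-I)-uA+I\big)$, the claimed formula. (For connected $G$ one has $m\ge n-1$; when $m=n-1$, the factor $(1-u^2)^{m-n}$ is interpreted via the fact that $1-u^2$ divides $\det(u^2(D-I)-uA+I)$.)

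The hard part will be nothing conceptual but rather the bookkeeping: pinning down the orientation conventions and then carefully verifying the incidence identities $A=ST^\top$, $D=SS^\top=TT^\top$, $\nonback=T^\top S-J$, $JT^\top=S^\top$, $JS^\top=T^\top$, and checking that both triple products above really equal the common middle matrix. Once those are in place the determinant computation and the final cancellation are immediate. (An alternative route, which I would mention but not pursue, is the combinatorial Foata--Zeilberger proof via the exponential formula and a cycle lemma; for a paper of this flavor the Bass factorization is the cleaner path.)
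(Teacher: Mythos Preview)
Your argument is the standard Bass factorization proof and it is correct: the incidence identities $A=ST^\top$, $D=SS^\top=TT^\top$, $\nonback=T^\top S-J$, $JT^\top=S^\top$, $JS^\top=T^\top$ all check out under your conventions, the two block products do collapse to the common matrix $\left(\begin{smallmatrix}(1-u^2)I_n & uS\\ (1-u^2)T^\top & I_{2m}+uJ\end{smallmatrix}\right)$, and the determinant comparison followed by cancellation in $\R[u]$ gives the claimed identity.

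However, the paper does not prove this theorem at all: Theorem~\ref{thm:ihara} is simply quoted from \cite{Ihara} with no argument, so there is nothing in the paper to compare your proof against. Your write-up supplies a self-contained proof where the paper gave none.

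One warning if this is to be spliced into the manuscript: your symbols $S$ and $T$ clash with the paper's later definitions. In the passage following Theorem~\ref{thm:ihara} the paper introduces a $2m\times n$ matrix $S$ with $S((u,v),x)=1$ iff $v=x$ and an $n\times 2m$ matrix $T$ with $T(x,(u,v))=1$ iff $x=u$; in that notation your ``$S$'' is the paper's $T$ and your ``$T$'' is the paper's $S^\top$. Either rename your incidence matrices (say $P,Q$) or rewrite the identities in the paper's convention (e.g.\ $A=TS$, $D=TT^\top=S^\top S$, $\nonback=ST-J$) before inserting the proof.
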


Note that this does not directly give us the eigenvalues of $\nonback$, since the eigenvalues are the solutions to $\det(\lambda I - \nonback)$. Therefore, for the solutions to $\det(I-u \nonback)$, it follows that $u=1/\lambda$. Moreover, the eigenvalues of $\nonback$ are $\pm 1$, each with algebraic multiplicity at least $m-n$, as well as the solutions to $\det(x^2I-xA+(D-I))=0$.

The remaining eigenvalues can be found by considering the following matrix $K$. For a graph $G$ with $n$ vertices and $m$ edges, the $2n \times 2n$ matrix $K(G)$ is defined as
\begin{align*}
    K=\begin{bmatrix}
    A & D-I_n\\
    -I_n & 0
\end{bmatrix},
\end{align*}
where $A$ and $D$ are as defined in Theorem \ref{thm:ihara}. Note that $\det(xI-K)=\det(x^2I-xA +(D-I))$. So all the eigenvalues of $K$ (respecting algebraic multiplicity) are eigenvalues of $\nonback$. This matrix $K$ has seen some interest for those working with the non-backtracking matrix because it is a smaller matrix ($2n \times 2n$ versus $2m \times 2m$) with alike spectral properties. 

Glover and Kempton in \cite{Kempton} further spectrally connected these two matrices by showing that $\nonback$ could be decomposed into a block diagonal matrix $M$ with $K$ as one of its blocks. For this decomposition, they defined the following matrices which we will use in Section \ref{sec: JCFK}. Let $S$ be a $2m \times n$ matrix and $T$ be an $n \times 2m$ matrix where 
\begin{center}
    \begin{tabular}{ccc}
       $S((u,v),x)= \begin{cases}
           1 & v=x\\
           0 & \text{ otherwise}
       \end{cases}$ & and &  $T(x,(u,v))= \begin{cases}
           1 & x=u\\
           0 & \text{ otherwise}
       \end{cases}$ .
    \end{tabular}
\end{center}
Let $R$ be a $2m \times 2(m-n)$ matrix, where the columns of $R$ are linearly independent and the eigenvectors of $\nonback$ for the eigenvalues $\pm 1$ which are in the null space of $ST$ (shown by Lubetzky and Peres in \cite{LP}). 

\begin{thm}[\cite{Kempton}]
    Let $G$ be a connected graph and $\nonback$ its non-backtracking matrix. Then
    \begin{align*}
        \nonback X = X \begin{bmatrix}
    K & 0 & 0\\
    0 & I_{m-n} & 0\\
    0 & 0 & -I_{m-n}
\end{bmatrix}
    \end{align*} and $X=\begin{bmatrix}
        S & T^T & R
    \end{bmatrix}$. 
\end{thm}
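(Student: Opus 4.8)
The plan is to verify the claimed identity $\nonback X = XM$ (where $M$ denotes the displayed block-diagonal matrix) one block of columns at a time, using the explicit entrywise descriptions of $S$, $T^T$, and $R$. Write $X=[\,S\mid T^T\mid R\,]$, split $R=[\,R_+\mid R_-\,]$ into its first $m-n$ and last $m-n$ columns (the $1$- and $(-1)$-eigenvectors, respectively), and recall $K=\begin{bmatrix}A & D-I_n\\ -I_n & 0\end{bmatrix}$. Expanding $XM$ block-column by block-column gives
\[
XM=\bigl[\,SA-T^T\;\bigm|\;S(D-I_n)\;\bigm|\;R_+\;\bigm|\;-R_-\,\bigr].
\]
Since $\nonback X=[\,\nonback S\mid \nonback T^T\mid \nonback R_+\mid \nonback R_-\,]$, it therefore suffices to establish the three identities $\nonback S = SA - T^T$, $\nonback T^T = S(D-I_n)$, and $\nonback R_\pm = \pm R_\pm$, and then match the blocks term by term.

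For the first identity I would compare the $\bigl((i,j),x\bigr)$ entries of both sides. By the definitions of $\nonback$ and $S$, the entry $(\nonback S)_{(i,j),x}$ counts the directed edges $(k,m)$ with $k=j$, $m\ne i$, and $m=x$, so it is $1$ exactly when $jx\in E(G)$ and $x\ne i$, and $0$ otherwise. The matrix $SA-T^T$ has $\bigl((i,j),x\bigr)$ entry $A_{j,x}-[\,x=i\,]=[\,jx\in E(G)\,]-[\,x=i\,]$, and a short case check --- $jx\in E$ with $x\ne i$; $jx\in E$ with $x=i$ (which can occur precisely because a directed edge $(i,j)$ in the index set already witnesses $ij\in E(G)$); and $jx\notin E$ (which forces $x\ne i$) --- shows the two sides agree. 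The identity $\nonback T^T=S(D-I_n)$ is entirely analogous: $(\nonback T^T)_{(i,j),x}$ counts the directed edges $(k,m)$ with $k=j$, $m\ne i$, and $k=x$, which equals $[\,x=j\,]\bigl(\deg(j)-1\bigr)$ since $i$ is one of the $\deg(j)$ neighbors of $j$, and this is exactly the $\bigl((i,j),x\bigr)$ entry of $S(D-I_n)$.

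The identity $\nonback R_\pm=\pm R_\pm$ is immediate from the definition of $R$, whose columns are by construction eigenvectors of $\nonback$ for $1$ and for $-1$ (that $m-n$ linearly independent such eigenvectors lying in $\ker(ST)$ exist is the content of Ihara's theorem together with \cite{LP}). Assembling the three identities yields $\nonback X=XM$. I do not anticipate a genuine obstacle here; the only delicate point is the bookkeeping in the two entrywise computations, in particular keeping the non-backtracking constraint $m\ne i$ straight and using that an index $(i,j)$ already witnesses the edge $ij$. One caveat worth recording in a remark: what is proved is precisely the intertwining relation $\nonback X=XM$, which is all the later sections require; this does not by itself give that $X$ is invertible --- indeed $X$ need not be, since $S\vec{1_n}=T^T\vec{1_n}=\vec{1_{2m}}$ forces $\rank[\,S\mid T^T\,]\le 2n-1$ --- so upgrading the identity to a genuine block-diagonalization of $\nonback$ is a separate matter.
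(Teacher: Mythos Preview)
The paper does not supply its own proof of this theorem: it is quoted verbatim as a result of Glover and Kempton \cite{Kempton} and used as a black box. So there is no in-paper argument to compare against.

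That said, your verification is correct. The two entrywise identities $\nonback S = SA - T^T$ and $\nonback T^T = S(D-I_n)$ are exactly right, and your case analysis handles the only subtle point (that when $jx\notin E(G)$ one automatically has $x\ne i$, since the row index $(i,j)$ already certifies $ij\in E(G)$). Together with the defining property of $R$ this gives the block-column match $\nonback X = XM$.

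Your closing caveat is also well taken and worth keeping: the relation $\nonback X = XM$ is genuinely only an intertwining, because $X$ is never invertible. Your rank estimate is sharp in the non-bipartite case: Lemma~\ref{lemma: null space of transition matrix} of the paper shows $\nul[\,S\mid T^T\,]$ has dimension $1$ (non-bipartite) or $2$ (bipartite), so $\rank X\le 2m-1$ always. This is precisely why the paper needs the separate argument of Theorem~\ref{thm: k and b are the same} to conclude that $\nonback$ and $M$ have the same Jordan form when $G$ has at least two cycles; the intertwining relation alone does not give it.
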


Let $M$ be the $2m \times 2m$ matrix
\begin{align*}
    M=\begin{bmatrix}
    K & 0 & 0\\
    0 & I_{m-n} & 0\\
    0 & 0 & -I_{m-n}
\end{bmatrix}.
\end{align*}

It is clear that $M$ has the same spectrum as $\nonback$. Glover and Kempton where able to use $M$ and $K$ to gain spectral information for $\nonback$. For example, they established that the geometric multiplicity of $\lambda=1$ of $K$ is the number of connected components of $G$ and related the spectral radius of $K$ and $A$ to that of $\nonback$.

\section{Jordan Canonical Form of $K$}\label{sec: JCFK}
Glover and Kempton show in \cite{Kempton} that eigenvectors for $K$ \emph{lift} to eigenvectors of $\nonback$, i.e.  if $\vec{v}$ is an eigenvector of $K$, then $X[\vec{v}, \vec{0}, \vec{0}]^T$ is an eigenvector for $\nonback$. First, we will generalize this result to Jordan chains.

\begin{prop}
\label{prop:EigenLift} 
% Re-written version with general Jordan chains. Note our statement is slightly stronger in the second case- any remaining parts of the Jordan chain lift.
% \textcolor{red}{I think we should change the chain notation to match that in section 4, with the superscripts denoting the chain and the subscripts reserved for entries.}
% \textcolor{blue}{done}
Let $\vec{u}^{(1)},\dots,\vec{u}^{(k)}$ be a Jordan chain of $K$ for the eigenvalue $\lambda$ and let $\vec{v}^{(i)}=\begin{bmatrix}
    \vec{u}^{(i)} & 0 &0\end{bmatrix}^T$. If $X\vec{v}^{(1)}\not=0$, then $X\vec{v}^{(1)},\dots,X\vec{v}^{(k)}$ is a Jordan chain of $\nonback$ for eigenvalue $\lambda$. If $X\vec{v}^{{(1)}}=\dots=X\vec{v}^{{(i-1)}}=0$ and $X\vec{v}^{{(i)}}\not=0$ for some $1<i\leq k$, then then $X\vec{v}^{(i)},\dots,X\vec{v}^{(k)}$ is a Jordan chain of $\nonback$ for eigenvalue $\lambda$.
\end{prop}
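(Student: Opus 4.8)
The plan is to transport the given Jordan chain of $K$ up to $\nonback$ through the matrix $X$, leaning entirely on the intertwining relation $\nonback X = XM$ supplied by the theorem of Glover and Kempton. No deep idea is needed here; essentially all the work is in tracking which of the lifted vectors vanish.

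First I would observe that the block vectors $\vec{v}^{(1)},\dots,\vec{v}^{(k)}$ form a Jordan chain of $M$ for $\lambda$. Since $M$ is block diagonal with $K$ as its leading $2n\times 2n$ block and each $\vec{v}^{(i)}$ is supported on the first $2n$ coordinates, the identity $M\vec{v}^{(i)} = \lambda\vec{v}^{(i)} + \vec{v}^{(i-1)}$ is just the Jordan-chain relation $K\vec{u}^{(i)} = \lambda\vec{u}^{(i)} + \vec{u}^{(i-1)}$ written in block form, where we set $\vec{u}^{(0)} = \vec{v}^{(0)} = \vec{0}$ as in the statement. Multiplying this identity on the left by $X$ and invoking $\nonback X = XM$ then yields $\nonback\bigl(X\vec{v}^{(i)}\bigr) = XM\vec{v}^{(i)} = \lambda\bigl(X\vec{v}^{(i)}\bigr) + X\vec{v}^{(i-1)}$, that is, $(\nonback - \lambda I)\bigl(X\vec{v}^{(i)}\bigr) = X\vec{v}^{(i-1)}$ for every $i$, with $X\vec{v}^{(0)} = \vec{0}$. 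So the sequence $X\vec{v}^{(1)},\dots,X\vec{v}^{(k)}$ automatically satisfies all the Jordan-chain recurrences; it remains only to sort out which of these vectors are nonzero.

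Finally I would run the case analysis on the first nonvanishing image. Let $i$ be the least index with $X\vec{v}^{(i)}\neq\vec{0}$; in the first scenario of the statement $i = 1$, and in the second $i$ is exactly the index named there. Applying the recurrence at index $i$ gives $(\nonback - \lambda I)\bigl(X\vec{v}^{(i)}\bigr) = X\vec{v}^{(i-1)} = \vec{0}$, so $X\vec{v}^{(i)}$ is a genuine eigenvector of $\nonback$ for $\lambda$. For $i < j \le k$, if some $X\vec{v}^{(j)}$ were the zero vector then the recurrence at index $j$ would force $X\vec{v}^{(j-1)} = \vec{0}$, and repeating this step downward would contradict $X\vec{v}^{(i)}\neq\vec{0}$; hence $X\vec{v}^{(i)},\dots,X\vec{v}^{(k)}$ are all nonzero, and together with the recurrences above they constitute a Jordan chain of $\nonback$ for $\lambda$.

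The only point that needs any care is this last nonvanishing bookkeeping, and it is entirely elementary; I anticipate no real obstacle. If one wishes to read ``Jordan chain'' in the strict maximal-length sense of the Preliminaries, one observes additionally that the chain produced realizes a Jordan block of $\nonback$ of size at least $k - i + 1$, which is all that the later sections require.
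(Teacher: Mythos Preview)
Your argument is correct and is essentially the same as the paper's: both use the intertwining relation $\nonback X = XM$ to deduce $(\nonback-\lambda I)X\vec{v}^{(i)} = X\vec{v}^{(i-1)}$, then verify nonvanishing of the lifted vectors. Your descent argument for nonvanishing is the contrapositive of the paper's forward induction, and your unified treatment of the two cases via the least nonvanishing index is a mild streamlining, but the substance is identical.
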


\begin{proof}
Let $\vec{u}^{(1)}$ is an eigenvector of $K$ for $\lambda$, that is $K\vec{u}^{(1)}=\lambda\vec{u}^{(1)}$. Then

\begin{align*}   \nonback X\vec{v}^{(1)}=X\begin{bmatrix}
    K & 0 & 0\\
    0 & I & 0\\
    0 & 0 & -I
\end{bmatrix}\begin{bmatrix}
    \vec{u}^{(1)}\\
    0\\
    0
\end{bmatrix}=X\begin{bmatrix}
    K\vec{u}^{(1)}\\ 0\\ 0\end{bmatrix}=X \lambda \begin{bmatrix}
    \vec{u}^{(1)}\\
    0\\
    0
\end{bmatrix}=\lambda X \vec{v}^{(1)}.
\end{align*}

If $X\vec{v}^{(1)}\not=0$, then $X\vec{v}^{(1)}$ is an eigenvector of $\nonback$ for eigenvalue $\lambda$.

Let $\vec{u}^{(1)},\dots,\vec{u}^{(k)}$ be a Jordan chain of $K$ for $\lambda$. Therefore $K\vec{u}^{{(i)}}=\lambda \vec{u}^{{(i)}}+\vec{u}^{{(i-1)}}$ for $1<i\leq k$. Consider
\begin{align*}
    \nonback X\vec{v}^{(i)}=X\begin{bmatrix}
    K & 0 & 0\\
    0 & I & 0\\
    0 & 0 & -I
\end{bmatrix}\begin{bmatrix}
    \vec{u}^{(i)}\\
    0\\
    0
\end{bmatrix}=X\begin{bmatrix}
    K\vec{u}^{(i)}\\ 0\\ 0\end{bmatrix}=X \begin{bmatrix}
   \lambda \vec{u}^{(i)}+\vec{u}^{{(i-1)}}\\
    0\\
    0
\end{bmatrix}=\lambda X \vec{v}^{(i)}+X\vec{v}^{{(i-1)}}.
\end{align*}

 Further $\nonback X\vec{v}^{(i)}- \lambda X\vec{v}^{(i)}= X\vec{v}^{{(i-1)}}$.
If $X\vec{v}^{{(1)}}\not=0$, then $\nonback X\vec{v}^{(2)}-\lambda X\vec{v}^{(2)}\not=0$. So $X \vec{v}^{(2)}$ is not in the eigenspace of $\lambda$ nor the null vector. Therefore $X\vec{v}^{(2)}$ is a generalized eigenvector of $\nonback$ for eigenvalue $\lambda$. 

    For the sake of induction, assume $X\vec{v}^{(i-1)}\not=0$, so $\nonback X\vec{v}^{(i)}-\lambda X\vec{v}^{(i)}\not=0$. Therefore $X\vec{v}^{(i)}\not=0$, so $X\vec{v}^{(i)}$ is a generalized eigenvector of $\nonback$ for eigenvalue $\lambda$. Therefore, $X\vec{v}^{(1)},\dots,X\vec{v}^{(k)}$ is a Jordan chain of $\nonback$ for the eigenvalue $\lambda$.

If $X\vec{v}^{{(1)}}=\dots=X\vec{v}^{{(i-1)}}=0$ and $X\vec{v}^{{(i)}}\not=0$ for some $1<i\leq k$, then $\nonback X\vec{v}^{(i)}=\lambda X\vec{v}^{(i)}$. Thus, $X\vec{v}^{(i)}$ is an eigenvector of $\nonback$ for $\lambda$. By the same argument as in the previous case, $X\vec{v}^{({\ell})}\not=0$ for all $i+1\leq \ell\leq k$. Therefore, $X\vec{v}^{(i)},\dots,X\vec{v}^{(k)}$ is a Jordan chain of $\nonback$ for the eigenvalue $\lambda$.\end{proof}

We now investigate the null space of $\begin{bmatrix}
    S & T^T
\end{bmatrix}$. This will allow us to connect the eigenspace properties of $K$ and $\nonback$.

\begin{lem}\label{lemma: null space of transition matrix}
Let $G$ be a graph on $n$ vertices. If $G$ is not bipartite, $\nul \left(\begin{bmatrix} S & T^T \end{bmatrix}\right)=\spa\left\{\begin{bmatrix} \vec{1_n}  & -\vec{1_n} \end{bmatrix}^T\right\}$. If $G$ is a bipartite graph with partite sets $\mathcal{A}$ and $\mathcal{B}$, then $\nul \left(\begin{bmatrix} S & T^T \end{bmatrix}\right)=\spa\left\{\begin{bmatrix} \vec{1_{|\mathcal{A}|}}  & \vec{1_{|\mathcal{B}|}} & -\vec{1_{|\mathcal{A}|}} & -\vec{1_{|\mathcal{B}|}}\end{bmatrix}^T,\begin{bmatrix} \vec{1_{|\mathcal{A}|}}  & -\vec{1_{|\mathcal{B}|}} & \vec{1_{|\mathcal{A}|}} & -\vec{1_{|\mathcal{B}|}}\end{bmatrix}^T\right\}$.
\end{lem}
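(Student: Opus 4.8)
The plan is to first compute the action of $\begin{bmatrix} S & T^T\end{bmatrix}$ explicitly. Writing a vector of $\R^{2n}$ in block form $\begin{bmatrix}\vec{a} & \vec{b}\end{bmatrix}^T$ with $\vec{a},\vec{b}\in\R^n$ indexed by the vertices of $G$, the definitions of $S$ and $T$ say that row $(u,v)$ of $S$ is $\vec{e_v}^T$ and row $(u,v)$ of $T^T$ is $\vec{e_u}^T$, so
\[
\left(\begin{bmatrix} S & T^T\end{bmatrix}\begin{bmatrix}\vec{a}\\\vec{b}\end{bmatrix}\right)_{(u,v)}=\vec{a}_v+\vec{b}_u .
\]
Hence $\begin{bmatrix}\vec{a} & \vec{b}\end{bmatrix}^T$ lies in $\nul\left(\begin{bmatrix} S & T^T\end{bmatrix}\right)$ precisely when $\vec{a}_v+\vec{b}_u=0$ for every directed edge $(u,v)$; since the directed edges come in pairs, this is the same as requiring $\vec{a}_v=-\vec{b}_u$ and $\vec{a}_u=-\vec{b}_v$ for every undirected edge $uv\in E(G)$.

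Next I would convert these edge conditions into a propagation rule along walks. Let $\phi\colon\R^2\to\R^2$ be the involution $\phi(x,y)=(-y,-x)$; the edge conditions say exactly that $(\vec{a}_v,\vec{b}_v)=\phi(\vec{a}_u,\vec{b}_u)$ whenever $u\sim v$. Therefore, for any walk $w=w_0\sim w_1\sim\cdots\sim w_\ell=z$ in $G$ we get $(\vec{a}_z,\vec{b}_z)=\phi^{\ell}(\vec{a}_w,\vec{b}_w)$, which equals $(\vec{a}_w,\vec{b}_w)$ when $\ell$ is even and $(-\vec{b}_w,-\vec{a}_w)$ when $\ell$ is odd, since $\phi^2$ is the identity. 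Because $G$ is connected, fixing one base vertex $w$ thus determines the whole vector once we know the two scalars $\vec{a}_w,\vec{b}_w$ together with the parity of a walk from $w$ to each other vertex, so the null space has dimension at most two.

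I would then split into cases. If $G$ is not bipartite, it has an odd closed walk through $w$, so $w$ is reachable from itself by walks of both parities; comparing $(\vec{a}_w,\vec{b}_w)=\phi(\vec{a}_w,\vec{b}_w)$ forces $\vec{b}_w=-\vec{a}_w$, and then, using connectedness and non-bipartiteness so that both parities are available for every vertex, $\vec{a}_z=\vec{a}_w$ and $\vec{b}_z=-\vec{a}_w$ for all $z$. So the null space is contained in $\spa\{\begin{bmatrix}\vec{1_n} & -\vec{1_n}\end{bmatrix}^T\}$, and since this vector visibly satisfies every edge condition, equality holds. If $G$ is bipartite with parts $\A$ and $\mathcal{B}$, pick $w\in\A$: every walk from $w$ into $\A$ has even length and every walk from $w$ into $\mathcal{B}$ has odd length, so with $(\alpha,\beta):=(\vec{a}_w,\vec{b}_w)$ the vector must be $\alpha$ on the $\vec{a}$-entries of $\A$, $\beta$ on the $\vec{b}$-entries of $\A$, $-\beta$ on the $\vec{a}$-entries of $\mathcal{B}$, and $-\alpha$ on the $\vec{b}$-entries of $\mathcal{B}$. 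Conversely, since every edge of a bipartite graph joins $\A$ to $\mathcal{B}$, each such vector does satisfy all the edge conditions, so the null space is exactly this two-dimensional family; taking $(\alpha,\beta)=(1,-1)$ and $(\alpha,\beta)=(1,1)$ yields the two basis vectors in the statement.

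The only delicate part is the parity bookkeeping: I would need the standard facts that in a connected non-bipartite graph every ordered pair of vertices is joined by walks of both parities, while in a connected bipartite graph the parity of any walk between two given vertices is fixed (even if they lie in the same part, odd otherwise). Everything else reduces to a direct verification once the action of $\begin{bmatrix} S & T^T\end{bmatrix}$ computed in the first step is in hand.
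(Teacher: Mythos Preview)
Your proof is correct and takes a genuinely different route from the paper. The paper does not compute the action of $\begin{bmatrix} S & T^T\end{bmatrix}$ entrywise; instead it uses the normal-equations identity $\nul(M)=\nul(M^T M)$ to replace the problem by the null space of $\begin{bmatrix} D & A\\ A & D\end{bmatrix}$, and then eliminates one block via $\vec{x}=(D^{-1}A)^2\vec{x}$ to reduce to the known fact that the random-walk matrix $D^{-1}A$ has eigenvalue $1$ with multiplicity one always and eigenvalue $-1$ with multiplicity one exactly in the bipartite case. Your argument is more elementary and fully self-contained: once you read off the edge constraints $\vec{a}_v+\vec{b}_u=0$, the involution $\phi$ and walk-propagation give the dimension bound and the explicit basis without importing any spectral facts about $D^{-1}A$. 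The paper's route is quicker if one is willing to cite the transition-matrix spectrum, and it highlights the link to random walks; yours trades that citation for the standard parity-of-walks characterisation of bipartiteness, which is arguably the same fact in combinatorial clothing.
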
 

\begin{proof}
Let $G$ be a simple, undirected graph on $n$ vertices. We will use the linear algebra fact that $\begin{bmatrix}
    S & T^T
\end{bmatrix}\vec{v}=\vec{0}$ if and only if $\begin{bmatrix}
    S^T \\T
\end{bmatrix}\begin{bmatrix}
    S & T^T
\end{bmatrix}\vec{v}=\vec{0}$.

Since Glover and Kempton \cite{Kempton} showed that $ST=A$, the adjacency matrix and by matrix multiplication $S^TS=TT^T=D$, the degree diagonal matrix, we see
$$    \begin{bmatrix}
    S^T \\T
\end{bmatrix}\begin{bmatrix}
    S & T^T
\end{bmatrix} = \begin{bmatrix}
    S^TS &S^T T^T\\
    ST & T T^T
\end{bmatrix}=\begin{bmatrix}
    D &A\\
    A & D
\end{bmatrix}.$$

Therefore, we will consider the null space of $\begin{bmatrix}
    D &A\\
    A & D
\end{bmatrix}$. Let $\vec{v}=[\vec{x}, \vec{y}]^T$ be in the null space of $\begin{bmatrix}
    D &A\\
    A & D
\end{bmatrix}$. So $D\vec{x} +A \vec{y}=\vec{0}$ and $A\vec{x}+D\vec{y}=\vec{0}$.
Since $D$ in invertible we get
$$\vec x = -D^{-1}A\vec y \quad \text{ and } \quad
    \vec y = -D^{-1}A\vec x.$$
Therefore,
\begin{align*}
    \vec x = (D^{-1}A)^2 \vec{x},
\end{align*}
that is that $\vec{x}$ is an eigenvector for the probability transition matrix (a stochastic matrix) of $G$ for the eigenvalue $\lambda=\pm 1$. For a simple, connected graph, we know the probability transition matrix has an eigenvalue $\lambda=1$ with multiplicity one always and eigenvalue $\lambda=-1$ with multiplicity one if and only if the graph is bipartite. 

Further, when $\lambda=1$, then $\vec{x}= \vec{1}$. Therefore $\vec{y}=-\vec{1}$. When $\lambda=-1$, then $\vec{x}=[\vec{1}, -\vec{1}]^T$. Therefore $\vec{y}=[-\vec{1}, \vec{1}]^T$ as desired. 
\end{proof}

We will now connect the null space of $X$ to the eigenspace of $K$. This will allow us to specify when the generalized eigenspace of $K$ does not connect to the generalized eigenspace of $\nonback$.

\begin{lem}\label{lemma:xv eqaul 0}
Let $\vec{u}$ be an eigenvector of $K$ for eigenvalue $\lambda$ and let $\vec{v}=\begin{bmatrix}
    \vec{u} & 0 & 0\end{bmatrix}^T$. If $X\vec{v}=\vec{0}$, then $\lambda=1$ if $G$ is not bipartite and $\lambda=\pm 1$ if $G$ is bipartite.
\end{lem}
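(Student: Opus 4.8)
The plan is to reduce the statement to the description of $\nul\!\left(\begin{bmatrix} S & T^T \end{bmatrix}\right)$ obtained in Lemma~\ref{lemma: null space of transition matrix}, and then read off the eigenvalue by a one-line computation with $K$. Write $\vec u = \begin{bmatrix} \vec x & \vec y \end{bmatrix}^T$ with $\vec x, \vec y \in \R^n$ and recall $X = \begin{bmatrix} S & T^T & R \end{bmatrix}$. Since $\vec v = \begin{bmatrix} \vec u & 0 & 0 \end{bmatrix}^T$, we get $X\vec v = S\vec x + T^T \vec y = \begin{bmatrix} S & T^T \end{bmatrix}\vec u$, so the hypothesis $X\vec v = \vec 0$ is exactly the assertion that $\vec u \in \nul\!\left(\begin{bmatrix} S & T^T \end{bmatrix}\right)$. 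The remainder of the argument combines this membership with the fact that $\vec u$ is an eigenvector of $K$.

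First I would dispatch the non-bipartite case. By Lemma~\ref{lemma: null space of transition matrix} the null space is one-dimensional, spanned by $\vec p := \begin{bmatrix} \vec{1_n} & -\vec{1_n} \end{bmatrix}^T$, so $\vec u$ is a nonzero scalar multiple of $\vec p$. Using that $A\vec{1_n} = D\vec{1_n}$ (both sides have $i$th entry $\deg(i)$), a direct computation gives
\[
K\vec p = \begin{bmatrix} A\vec{1_n} - (D - I_n)\vec{1_n} \\ -\vec{1_n} \end{bmatrix} = \begin{bmatrix} \vec{1_n} \\ -\vec{1_n} \end{bmatrix} = \vec p,
\]
so $\vec u$ is an eigenvector of $K$ for eigenvalue $1$ and hence $\lambda = 1$.

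For the bipartite case, let $\vec w \in \R^n$ be $1$ on $\mathcal{A}$ and $-1$ on $\mathcal{B}$. Regrouping the two spanning vectors of Lemma~\ref{lemma: null space of transition matrix} into two blocks of size $n$, the null space is spanned by $\vec p = \begin{bmatrix} \vec{1_n} & -\vec{1_n} \end{bmatrix}^T$ and $\vec q := \begin{bmatrix} \vec w & \vec w \end{bmatrix}^T$. We already have $K\vec p = \vec p$, and since every neighbor of a vertex lies in the opposite part we have $A\vec w = -D\vec w$, which yields
\[
K\vec q = \begin{bmatrix} A\vec w + (D - I_n)\vec w \\ -\vec w \end{bmatrix} = \begin{bmatrix} -\vec w \\ -\vec w \end{bmatrix} = -\vec q .
\]
Thus $\nul\!\left(\begin{bmatrix} S & T^T \end{bmatrix}\right)$ is a $K$-invariant subspace on which $K$ is diagonalizable with eigenvalues $1$ (on $\vec p$) and $-1$ (on $\vec q$); any $K$-eigenvector inside it must therefore have eigenvalue $1$ or $-1$, so $\lambda = \pm 1$.

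I do not expect a real obstacle here: the only point needing a little care is the bookkeeping at the end of the bipartite case — verifying that an eigenvector of $K$ living in the span of $\vec p$ and $\vec q$ cannot be a genuine mixture $a\vec p + b\vec q$ with $a, b \neq 0$ (immediate from $K(a\vec p + b\vec q) = a\vec p - b\vec q$) — together with confirming that the coordinate regrouping turns the Lemma~\ref{lemma: null space of transition matrix} generators into exactly $\vec p$ and $\vec q$.
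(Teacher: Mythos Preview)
Your proposal is correct and follows essentially the same route as the paper: reduce $X\vec v=\vec 0$ to $\vec u\in\nul\!\left(\begin{bmatrix}S&T^T\end{bmatrix}\right)$, invoke Lemma~\ref{lemma: null space of transition matrix}, verify that the spanning vectors are $K$-eigenvectors for $1$ (and $-1$ in the bipartite case), and then note that eigenvectors for distinct eigenvalues cannot mix. Your write-up is a bit more explicit in carrying out the computations $K\vec p=\vec p$ and $K\vec q=-\vec q$ and in spelling out the final ``no genuine mixture'' step, but the argument is the same; the only cosmetic slip is writing $\vec x,\vec y\in\R^n$ when $\vec u$ could a priori be complex, which is harmless since the null-space generators are real and the linear-independence argument works over $\mathbb{C}$.
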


\begin{proof}
For an eigenvector $\vec{u}$ of $K$, let $\vec v=\begin{bmatrix}\vec{u} &\vec{0} &\vec{0} \end{bmatrix}^T$. If $X\vec{v}=\vec{0}$, then $\vec{u} \in \nul\left(\begin{bmatrix}
    S & T^T
\end{bmatrix}\right).$ 

If $G$ is not bipartite, then $\nul\left(\begin{bmatrix}
    S & T^T
\end{bmatrix}\right)= \spa\left\{\begin{bmatrix}
    \vec{1} & -\vec{1}
\end{bmatrix}^T\right\}$ by Lemma \ref{lemma: null space of transition matrix}. By computation, $\begin{bmatrix}
    \vec{1} & -\vec{1}
\end{bmatrix}^T$ is a eigenvector for $K$ for the eigenvalue $\lambda = 1$. 

If $G$ is bipartite with parts $\mathcal{A}$ and $\mathcal{B}$, then \begin{align*}
    \nul\left(\begin{bmatrix}
    S & T^T
\end{bmatrix}\right)= \spa\left\{\begin{bmatrix}
    \vec{1_{|\mathcal{A}|}} & \vec{1_{|\mathcal{B}|}} &  -\vec{1_{|\mathcal{A}|}} & -\vec{1_{|\mathcal{B}|}}
\end{bmatrix}^T, \begin{bmatrix}
    \vec{1_{|\mathcal{A}|}} & -\vec{1_{|\mathcal{B}|}} &  \vec{1_{|\mathcal{A}|}} & -\vec{1_{|\mathcal{B}|}}
\end{bmatrix}^T\right\}
\end{align*} by Lemma \ref{lemma: null space of transition matrix}. By computation,  $\begin{bmatrix}
   \vec{1_{|\mathcal{A}|}} & \vec{1_{|\mathcal{B}|}} &  -\vec{1_{|\mathcal{A}|}} & -\vec{1_{|\mathcal{B}|}}
\end{bmatrix}^T$ is a eigenvector for $K$ for the eigenvalue $\lambda = 1$ and $\begin{bmatrix}
   \vec{1_{|\mathcal{A}|}} & -\vec{1_{|\mathcal{B}|}} &  \vec{1_{|\mathcal{A}|}} & -\vec{1_{|\mathcal{B}|}}
\end{bmatrix}^T$ is a eigenvector for $K$ for the eigenvalue $\lambda = -1$. Since eigenspaces are disjoint, there is no eigenvector of $K$ that is a nontrivial linear combination of these two vectors. 
\end{proof}

The next two propositions are results about the algebraic and geometric multiplicity of the eigenvalues $\pm 1$ for the non-backtracking matrix. 

\begin{prop}\cite{Torres}\label{torres1}
Let $G$ have at least two cycles. Then $\lambda=1$ is an eigenvalue for $\nonback$ with algebraic and geometric multiplicity $m - n +1$.
\end{prop}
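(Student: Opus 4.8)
The plan is to leverage Ihara's Theorem (Theorem~\ref{thm:ihara}) together with the relationship between $\nonback$, the block matrix $M$, and $K$. From Ihara's Theorem, the eigenvalue $\lambda = 1$ of $\nonback$ has algebraic multiplicity exactly $(m-n) + a_K$, where $a_K$ is the algebraic multiplicity of $\lambda = 1$ as an eigenvalue of $K$ (equivalently, as a root of $\det(x^2 I - xA + (D-I))$). So the first step is to compute the algebraic multiplicity of $\lambda=1$ for $K$: I would show $\det(x^2 I - xA + (D-I))$ has $x=1$ as a root of multiplicity exactly $1$ when $G$ has at least two cycles. At $x=1$ the matrix becomes $I - A + D - I = D - A = L$, the graph Laplacian, whose nullity is the number of connected components, namely $1$ since $G$ is connected. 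To get that the root multiplicity is exactly one (not more), I would differentiate: writing $p(x) = \det(x^2 I - xA + (D-I))$ and using Jacobi's formula, $p'(1) = \operatorname{tr}\big(\operatorname{adj}(L)\cdot(2I - A)\big)$; since $L$ has rank $n-1$, $\operatorname{adj}(L)$ is a nonzero rank-one matrix proportional to $\vec{1}\vec{1}^T$, and $\operatorname{tr}(\vec{1}\vec{1}^T(2I-A)) = \vec{1}^T(2I - A)\vec{1} = 2n - 2m \neq 0$ precisely when $m \neq n$, i.e. when $G$ has at least two cycles. This pins the algebraic multiplicity of $\lambda=1$ for $K$ at exactly $1$, hence the algebraic multiplicity for $\nonback$ is exactly $(m-n)+1$.

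For the geometric multiplicity, I would argue it equals the algebraic multiplicity by producing $m-n+1$ linearly independent eigenvectors. Using the decomposition $\nonback X = XM$ with $X = [\,S \;\; T^T \;\; R\,]$: the matrix $R$ contributes $2(m-n)$ eigenvectors for $\pm 1$ lying in $\nul(ST)$, of which $m-n$ are for $\lambda = 1$; then Proposition~\ref{prop:EigenLift} lifts the eigenvector $[\vec{1}_n \;\; -\vec{1}_n]^T$ of $K$ (which has eigenvalue $1$) — but wait, by Lemma~\ref{lemma:xv eqaul 0} this particular vector lies in $\nul\big([S \;\; T^T]\big)$, so $X\vec{v} = \vec{0}$ and it does \emph{not} lift. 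Instead I would count directly: the geometric multiplicity of $\lambda=1$ for $\nonback$ is the nullity of $\nonback - I$. Since the algebraic multiplicity is $m-n+1$, it suffices to exhibit $m-n+1$ independent eigenvectors. The $m-n$ columns of $R$ for $\lambda=1$ are independent eigenvectors; I need one more, coming from genuine non-backtracking structure (e.g. a cycle vector, or the lift of a $K$-eigenvector other than the degenerate one — note $\lambda=1$ for $K$ has geometric multiplicity $1$ with eigenvector $[\vec 1_n\;\;-\vec 1_n]^T$, which is exactly the one killed by $X$, so one must be more careful here). The cleaner route: since $G$ has at least two independent cycles, there is a cycle-supported eigenvector of $\nonback$ for $\lambda=1$ linearly independent from the columns of $R$; combined with $R$'s contribution this gives geometric multiplicity $\geq m-n+1 \geq$ algebraic multiplicity, forcing equality.

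The main obstacle I anticipate is the geometric multiplicity half, specifically reconciling the fact that the "obvious" $K$-eigenvector for $\lambda=1$ is annihilated by $X$ (Lemma~\ref{lemma:xv eqaul 0}) with the need to still produce the full complement of $\nonback$-eigenvectors. This means the extra eigenvector beyond the $R$-columns cannot come from lifting $K$ via Proposition~\ref{prop:EigenLift}; it must be constructed directly on the directed-edge space — most naturally by assigning $\pm 1$ weights around a cycle in a non-backtracking-consistent way and checking it is not a combination of $R$'s columns (which span the $\pm1$-eigenvectors inside $\nul(ST)$, whereas a single cycle's indicator vector generically is not). Verifying linear independence from $R$ and that exactly one such vector (modulo $R$) survives — matching the algebraic count of $1$ from the $K$ side — is the delicate bookkeeping step; everything else (the determinant/derivative computation, the Laplacian identification) is routine.
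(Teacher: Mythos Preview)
The paper does not prove this proposition; it is stated with the citation \cite{Torres} and then invoked as a known input in the proof of Theorem~\ref{thm: k and b are the same}. There is no in-paper argument to compare against.

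On the merits of your attempt: the algebraic-multiplicity half is correct and clean. Identifying $x^2I-xA+(D-I)$ at $x=1$ with the Laplacian $L=D-A$ gives nullity $1$ for connected $G$, and Jacobi's formula together with $\operatorname{adj}(L)=\tau(G)\,\vec 1\vec 1^{\,T}$ (Matrix--Tree) yields $p'(1)=\tau(G)(2n-2m)\neq 0$ precisely when $m>n$, i.e.\ when $G$ has at least two independent cycles. That pins the algebraic multiplicity at $(m-n)+1$.

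The geometric-multiplicity half has a genuine gap. You correctly diagnose the obstruction: the unique $K$-eigenvector $[\vec 1_n\;\;{-\vec 1_n}]^T$ for $\lambda=1$ is annihilated by $X$ (Lemma~\ref{lemma:xv eqaul 0}), so Proposition~\ref{prop:EigenLift} cannot supply the $(m{-}n{+}1)$st $\nonback$-eigenvector. But your proposed remedy --- a ``cycle-supported'' vector on the directed-edge space --- is never carried out. The $m-n$ columns of $R$ attached to eigenvalue $1$ are themselves built from oriented-cycle data, so a further cycle indicator will not obviously be independent of them; you give neither an explicit construction nor an independence check. Calling this ``the delicate bookkeeping step'' does not discharge it. Closing the gap requires either an explicit $(m{-}n{+}1)$st eigenvector together with a proof that it lies outside the span of the $R$-columns, or a direct rank computation for $\nonback - I$ --- which is essentially the content of Torres's argument and the reason the present paper cites rather than reproves the result.
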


\begin{prop}\cite{Torres}\label{torres2}
Let $G$ have at least two cycles. Then $\lambda=-1$ is an eigenvalue for $\nonback$, with algebraic and geometric multiplicity $m - n +1$ if $G$ is bipartite and algebraic and geometric multiplicity $m - n$ if $G$ is not bipartite. 
\end{prop}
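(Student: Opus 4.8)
The plan is to pin down the algebraic multiplicity of $-1$ for $\nonback$ using Ihara's theorem together with a short Jordan-chain computation for $K$, and then to produce enough honest $(-1)$-eigenvectors of $\nonback$ to force the algebraic and geometric multiplicities to coincide. By Theorem~\ref{thm:ihara}, $\det(I-u\nonback)=(1-u)^{m-n}(1+u)^{m-n}\det(u^2(D-I)-uA+I)$; comparing the multiplicity of the root $u=-1$ on the two sides, and substituting $x=1/u$ in the identity $\det(xI-K)=\det(x^2I-xA+(D-I))$, one sees that the algebraic multiplicity of $-1$ as an eigenvalue of $\nonback$ equals $(m-n)$ plus the algebraic multiplicity of $-1$ as an eigenvalue of $K$. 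So it suffices to prove (i) that $-1$ is an eigenvalue of $K$ with algebraic multiplicity $1$ when $G$ is bipartite and is not an eigenvalue of $K$ when $G$ is not bipartite, and (ii) that $\nonback$ has $m-n+1$ (resp.\ $m-n$) linearly independent eigenvectors for $-1$ in the bipartite (resp.\ non-bipartite) case.

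For (i), write a putative $(-1)$-eigenvector of $K$ as $[\vec{x},\vec{y}]^T$: the bottom block of $(K+I)[\vec{x},\vec{y}]^T=\vec0$ forces $\vec{y}=\vec{x}$, and the top block then reduces to $(D+A)\vec{x}=\vec0$, so the $(-1)$-eigenspace of $K$ is $\{[\vec{x},\vec{x}]^T:(D+A)\vec{x}=\vec0\}$. The signless Laplacian $D+A$ of a connected graph has nullity $1$ if $G$ is bipartite---with kernel spanned by the vector $\vec{s}$ equal to $1$ on $\mathcal{A}$ and $-1$ on $\mathcal{B}$---and nullity $0$ otherwise; this settles the non-bipartite case and yields geometric multiplicity $1$ for $K$ at $-1$ when $G$ is bipartite. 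Since the geometric multiplicity is $1$, there is at most one Jordan block, and to rule out a block of size $\ge 2$ I would try to solve $(K+I)[\vec{x},\vec{y}]^T=[\vec{s},\vec{s}]^T$: the bottom block gives $\vec{y}=\vec{x}+\vec{s}$, and the top block becomes $(D+A)\vec{x}=2\vec{s}-D\vec{s}$. As $D+A$ is symmetric, this is solvable iff $2\vec{s}-D\vec{s}$ is orthogonal to $\nul(D+A)=\spa\{\vec{s}\}$, i.e.\ iff $\vec{s}^T(2\vec{s}-D\vec{s})=2n-2m=0$; because $G$ has at least two cycles, $m\ge n+1$, so this fails. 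Hence no length-$2$ chain exists, and the algebraic multiplicity of $-1$ for $K$ equals its geometric multiplicity, namely $1$ if $G$ is bipartite and $0$ otherwise.

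For (ii), let $N$ be the unsigned vertex--edge incidence matrix of $G$. Given $\vec{f}\in\nul(N)$, view $\vec{f}$ as a function on undirected edges and extend it to directed edges symmetrically, $\vec{f}_{(i,j)}=\vec{f}_{(j,i)}=\vec{f}_{ij}$; then for every directed edge $(i,j)$ the relation $\vec{f}\in\nul(N)$ gives $(\nonback\vec{f})_{(i,j)}=\sum_{k\sim j,\,k\ne i}\vec{f}_{(j,k)}=\sum_{k\sim j}\vec{f}_{jk}-\vec{f}_{ij}=-\vec{f}_{(i,j)}$, so the extended vector is a $(-1)$-eigenvector of $\nonback$ and the extension map is injective. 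Since $NN^T=D+A$, we have $\rank N=\rank(D+A)$, which equals $n-1$ if $G$ is bipartite and $n$ otherwise, so $\dim\nul(N)=m-n+1$ (bipartite) or $m-n$ (non-bipartite). This matches the algebraic multiplicity from (i); as the geometric multiplicity of an eigenvalue never exceeds its algebraic multiplicity, the two coincide, $-1$ is semisimple for $\nonback$, and since $m-n\ge 1$ it is genuinely an eigenvalue, with the stated multiplicities.

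The main obstacle is exactly the non-existence of a length-$2$ Jordan chain for $K$ at $\lambda=-1$: this is where the hypothesis ``at least two cycles'' is used, through $m>n$, and it must genuinely fail for unicyclic graphs, where $2n-2m=0$ and a $2\times 2$ Jordan block does occur---consistent with the paper handling graphs with at most one cycle separately. Everything else is routine: the nullities of $D+A$ and $N$ for connected (non-)bipartite graphs and the identity $NN^T=D+A$ are standard. Proposition~\ref{torres1} can be proved by the same template, with $D+A$ replaced by the Laplacian $D-A$ (whose kernel $\spa\{\vec{1}\}$ is always one-dimensional, irrespective of bipartiteness) and $N$ by the signed incidence matrix; there the solvability obstruction for a length-$2$ chain is $2(m-n)\ne 0$, which accounts for the uniform answer $m-n+1$.
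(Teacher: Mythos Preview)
The paper does not prove this proposition; it is quoted from \cite{Torres} and then used as a black box in the proof of Theorem~\ref{thm: k and b are the same}. So there is no in-paper argument to compare yours against.

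Your proof is correct. The reduction via Ihara's theorem to the algebraic multiplicity of $-1$ for $K$ is sound (using $Q(u)=u^{2n}\det((1/u)I-K)$, the order of vanishing at $u=-1$ matches the multiplicity of $-1$ in $\det(xI-K)$). The eigenspace computation $(K+I)[\vec{x},\vec{y}]^T=\vec0 \Leftrightarrow \vec{y}=\vec{x},\ (D+A)\vec{x}=\vec0$ is exactly right, and the standard signless-Laplacian fact gives the geometric multiplicity for $K$. The key step---ruling out a length-$2$ chain via the orthogonality obstruction $\vec{s}^T(2\vec{s}-D\vec{s})=2n-2m\ne 0$---is clean and pinpoints precisely where ``at least two cycles'' ($m\ge n+1$) is used; you are right that this obstruction vanishes exactly when $m=n$, consistent with Theorem~\ref{thm:unicyclic}. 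The incidence-matrix construction of $(-1)$-eigenvectors of $\nonback$ from $\nul(N)$, together with $NN^T=D+A$, correctly yields the required number of independent eigenvectors and forces equality of algebraic and geometric multiplicities.

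Your closing remark that the same template with $D-A$ and the signed incidence matrix handles Proposition~\ref{torres1} is also correct.
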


The defectiveness of the eigenvalue $\lambda=0$, for the non-backtracking matrix has been explicitly found by Torres (see \cite{Torres}) for graphs with minimum degree one. Such graphs tend to be removed from the discussion about graphs with defective eigenvalues for the non-backtracking matrix because the defectiveness of $\lambda=0$ is already well understood. We include some of these graphs here for completion, but still require at least one cycle, since $M$ is only well defined when $m \geq n$. We are now able to prove our main result.

\begin{thm}\label{thm: k and b are the same}
Let $G$ be a graph. % with $\delta\geq 2$. 
If $G$ has at least two cycles, then $\nonback$ and $M$ have the same Jordan form.
\end{thm}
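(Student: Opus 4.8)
The plan is to show that $M$ and $\nonback$ have the same Jordan form by showing that every Jordan chain of $M$ either lifts to a Jordan chain of $\nonback$ of the same length, or is "absorbed" in a way that is exactly compensated by the structure of the $\pm 1$ blocks. Since $M$ is block diagonal with blocks $K$, $I_{m-n}$, and $-I_{m-n}$, its Jordan form is the Jordan form of $K$ together with $m-n$ trivial blocks at $\lambda=1$ and $m-n$ trivial blocks at $\lambda=-1$. So it suffices to understand how the Jordan chains of $K$ transfer to $\nonback$ via the map $\vec u \mapsto X[\vec u,\vec 0,\vec 0]^T$, and then to reconcile the counts at $\lambda = \pm 1$ using Propositions \ref{torres1} and \ref{torres2}.

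First I would handle all eigenvalues $\lambda \neq \pm 1$. For these, Lemma \ref{lemma:xv eqaul 0} guarantees that no eigenvector $\vec u$ of $K$ has $X[\vec u,\vec 0,\vec 0]^T = \vec 0$, so by Proposition \ref{prop:EigenLift} every Jordan chain of $K$ for $\lambda$ lifts to a Jordan chain of $\nonback$ of the same length. Injectivity of the lift on the relevant generalized eigenspace must also be checked: since $X = [S \ T^T \ R]$ has columns that I claim are linearly independent (this is asserted in the setup — the columns of $R$ together with $S$ and $T^T$; one should confirm the full matrix $X$ is invertible, which follows because $\nonback X = XM$ exhibits a similarity when $X$ is square $2m \times 2m$ and of full rank), the lift $\vec v \mapsto X\vec v$ is injective on the subspace $\{[\vec u,\vec 0,\vec 0]^T\}$ precisely when $[\vec u, \vec 0,\vec 0]^T \notin \nul X$, which Lemma \ref{lemma:xv eqaul 0} controls. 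Combined with the $I_{m-n}$ and $-I_{m-n}$ blocks contributing nothing at $\lambda \neq \pm 1$, the Jordan structure of $M$ at $\lambda \neq \pm 1$ is carried faithfully onto $\nonback$, and a dimension count (total algebraic multiplicity is $2m$ on both sides) shows nothing is lost.

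The heart of the argument is the eigenvalue $\lambda = 1$ (and symmetrically $\lambda = -1$ in the bipartite case). Here the hypothesis that $G$ has at least two cycles, i.e. $m - n + 1 \geq 2$, is essential. By Propositions \ref{torres1} and \ref{torres2}, $\nonback$ has $\lambda = 1$ with equal algebraic and geometric multiplicity $m-n+1$, so $\nonback$ has exactly $m-n+1$ trivial Jordan blocks at $\lambda = 1$ and no nontrivial ones. On the $M$ side, there are $m-n$ trivial blocks from the $I_{m-n}$ summand plus whatever $K$ contributes at $\lambda = 1$; I need to show $K$ contributes exactly one trivial block at $\lambda=1$. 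The geometric multiplicity of $\lambda = 1$ for $K$ is $1$ (this is the Glover–Kempton fact quoted in the introduction: it equals the number of connected components, which is $1$). So $K$ has a single Jordan block at $\lambda=1$; the remaining task is to show this block is trivial, i.e. that the eigenvector $[\vec 1, -\vec 1]^T$ does not extend to a chain of length $2$. Equivalently, one must show the algebraic multiplicity of $\lambda=1$ for $K$ is $1$; but since the total algebraic multiplicity across $M$'s diagonal blocks must equal $2m$ and the $\lambda=1$ algebraic multiplicity of $\nonback$ is $m-n+1$ with the $I_{m-n}$ block already supplying $m-n$, the $K$-block at $\lambda=1$ has algebraic multiplicity exactly $1$, hence is trivial. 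I would make this rigorous by the following bookkeeping: Lemma \ref{lemma:xv eqaul 0} tells us the one eigenvector of $K$ at $\lambda=1$ lies in $\nul X$, so it does \emph{not} lift; thus the $m-n+1$ eigenvectors of $\nonback$ at $\lambda=1$ must all come from the $I_{m-n}$ block plus the $R$-columns, and Proposition \ref{prop:EigenLift}'s second clause (a chain whose head dies but whose later vectors survive) would only create trouble if $K$ had a chain of length $\geq 2$ at $\lambda=1$ — which the algebraic-multiplicity-one argument rules out. The symmetric analysis at $\lambda=-1$ splits into the bipartite case (geometric and algebraic multiplicity $m-n+1$ for $\nonback$, and $K$ contributes one trivial block whose eigenvector $[\vec 1,-\vec 1,\vec 1,-\vec 1]^T \in \nul X$) and the non-bipartite case (multiplicity $m-n$ for $\nonback$, $K$ contributes nothing at $\lambda=-1$ since $-1$ is not an eigenvalue of the transition matrix).

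The main obstacle I anticipate is nailing down that the $K$-block at $\lambda = \pm 1$ is genuinely \emph{trivial} rather than merely having geometric multiplicity one; the cleanest route is the global dimension count I sketched (algebraic multiplicities on both sides sum to $2m$, and the $\nonback$-side multiplicities at $\pm 1$ are pinned by Torres), but one should double-check there is no subtlety when $X$ fails to be invertible — in fact $X$ \emph{is} $2m \times 2m$ and, since $\nul X$ has dimension equal to the number of null-space vectors found in Lemma \ref{lemma: null space of transition matrix} (one or two), $X$ is \emph{not} invertible, so the argument cannot simply say "similar matrices have the same Jordan form." Instead the correct statement is that $X$ restricted to an appropriate complement of its kernel is injective, and the lift accounts for exactly $2m - \dim\nul X$ of the generalized eigenvectors, with the missing $1$ or $2$ dimensions at $\lambda = \pm 1$ being precisely the ones already over-counted — so the Jordan forms match block-for-block. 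I would organize the final proof as: (1) reduce to comparing Jordan structure of $K$ (inside $M$) with that of $\nonback$; (2) dispatch $\lambda \neq \pm 1$ via injective lifting; (3) dispatch $\lambda = 1$ and $\lambda = -1$ via Torres's multiplicity formulas together with the geometric multiplicity one fact for $K$ and Lemma \ref{lemma:xv eqaul 0}; (4) conclude by the $2m$ total-dimension count.
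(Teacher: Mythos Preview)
Your plan is correct and follows essentially the same route as the paper's proof: lift Jordan chains of $K$ to $\nonback$ via Proposition~\ref{prop:EigenLift}, use Lemma~\ref{lemma:xv eqaul 0} to isolate the only possible failure cases at $\lambda=\pm 1$, and then invoke Torres's multiplicity results (Propositions~\ref{torres1} and~\ref{torres2}) together with the shared characteristic polynomial to match the Jordan structures at $\pm 1$. The only cosmetic difference is that the paper verifies $M$ is diagonalizable at $\pm 1$ by exhibiting the eigenvectors directly, whereas you deduce that $K$'s block at $\lambda=\pm 1$ is trivial from the algebraic-multiplicity count; your aside that $-1$ is not an eigenvalue of $K$ in the non-bipartite case should be justified by that same count rather than by the transition-matrix remark.
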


\begin{proof}
    Let $G$ be a graph. % with $\delta \geq 2$. 
    Let $\vec{u}^{(1)},\dots,\vec{u}^{(k)}$ be a Jordan chain for $K$ for the eigenvalue $\lambda$ and let $\vec{v}^{(i)}=\begin{bmatrix}
        \vec{u}^{(i)} & 0 & 0
    \end{bmatrix}^T$. Then $\vec{v}^{(1)},\dots,\vec{v}^{(k)}$ is a Jordan chain for $M$ for the eigenvalue $\lambda$. We note that since the other block of $M$ is a diagonal matrix (and thus $\vec{e}{_{2n+1}}, \ldots, \vec{e}{_{2m}}$, which are linearly independent from $\vec{v}^{(i)}$, are are eigenvectors of $M$), this is the only way to have a nontrivial Jordan chain for $M$. Also, since $M$ and $\nonback$ have the same spectrum by construction, we only need to show that the Jordan blocks are the same size.

    If $X\vec{v}^{(1)} \neq 0$, then by Proposition \ref{prop:EigenLift}, $X\vec{v}^{(1)}, \ldots, X\vec{v}^{(k)}$ is a Jordan chain of $\nonback$ for $\lambda$. 

    If $X\vec{v}^{(1)} =0$, then by Lemma \ref{lemma:xv eqaul 0}, $\lambda=1$ if $G$ is not bipartite or $\lambda=\pm 1$ if $G$ is bipartite. 

    First, let $G$ be bipartite. By Proposition \ref{torres1} and \ref{torres2}, we know the algebraic and geometric multiplicity of $\lambda=\pm 1$ for $\nonback$ is $m-n+1$. We claim that $M$ also has a full set of eigenvectors for the eigenvalues $\pm 1$. 

    If $\lambda = 1$, then eigenvectors of $M$ are $\vec{v}^{(1)}, \vec{e}{_{2n+1}}, \ldots, \vec{e_{n+m-1}}$ which form a set of $m-n+1$ linearly independent vectors. 

    If $\lambda=-1$, then eigenvectors of $M$ are $\vec{v}^{(1)}, \vec{e_{n+m}}, \ldots, \vec{e_{2m}}$ which form a set of $m-n+1$ linearly independent vectors. 

    Now, let $G$ not be biparite. By Proposition \ref{torres1} and \ref{torres2}, we know the algebraic and geometric multiplicity of $\lambda=1$ for $\nonback$ is $m-n+1$ and the algebraic and geometric multiplicity of $\lambda=-1$ for $\nonback$ is $m-n$. We claim that $M$ also has a full set of eigenvectors for the eigenvalues $\pm 1$. 
    
     If $\lambda = 1$, then eigenvectors of $M$ are $\vec{v}^{(1)}, \vec{e_{2n+1}}, \ldots, \vec{e_{n+m-1}}$ which form a set of $m-n+1$ linearly independent vectors. 

    If $\lambda=-1$, then eigenvectors of $M$ are $\vec{e_{n+m}}, \ldots, \vec{e_{2m}}$ which form a set of $m-n$linearly independent vectors. 

    Therefore, the length of every Jordan chain is the same for $M$ and $\nonback$ for every eigenvalue of $\lambda$ meaning they have the same dimension of the generalized eigenspaces for every eigenvalue of $\lambda$ and their Jordan forms are the same. 
\end{proof}

\begin{thm}
\label{thm:unicyclic}
Let $G$ not be a tree.
     The Jordan forms of $\nonback$ and $M$ differ if and only if $G$ is unicyclic and they only differ for the eigenvalue $\lambda=1$ when the cycle is odd and for the eigenvalues $\lambda=\pm 1$ when the cycle is even.
\end{thm}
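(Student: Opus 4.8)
The plan is to reduce to the previous theorem and then analyze the one remaining case. First I would observe that by Theorem \ref{thm: k and b are the same}, if $G$ has at least two cycles then $\nonback$ and $M$ already have the same Jordan form, so the only way their Jordan forms can differ is if $G$ has at most one cycle. Since we assume $G$ is not a tree, $G$ must be unicyclic, i.e. $m=n$. This immediately gives the ``only if'' direction of the biconditional. It remains to (a) show that for a unicyclic graph the Jordan forms of $\nonback$ and $M$ genuinely \emph{do} differ, and (b) pin down that the discrepancy occurs exactly for $\lambda=1$ when the cycle is odd and for $\lambda=\pm1$ when the cycle is even.

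For the unicyclic case, the key tool is again the lifting apparatus from Proposition \ref{prop:EigenLift} together with Lemma \ref{lemma:xv eqaul 0} and Lemma \ref{lemma: null space of transition matrix}. Since $m=n$, the blocks $I_{m-n}$ and $-I_{m-n}$ of $M$ are empty, so $M$ is similar to $K$; hence the Jordan form of $M$ is exactly the Jordan form of $K$. Now take the eigenvector $\vec{u}=\begin{bmatrix}\vec{1_n} & -\vec{1_n}\end{bmatrix}^T$ of $K$ for $\lambda=1$ (and, when $G$ is bipartite with parts $\mathcal{A},\mathcal{B}$, also $\vec{u}=\begin{bmatrix}\vec{1_{|\mathcal{A}|}} & -\vec{1_{|\mathcal{B}|}} & \vec{1_{|\mathcal{A}|}} & -\vec{1_{|\mathcal{B}|}}\end{bmatrix}^T$ for $\lambda=-1$). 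By Lemma \ref{lemma:xv eqaul 0} (or direct computation using $ST=A$, $S^TS=TT^T=D$) the lift $X\vec{v}^{(1)}$ of such an eigenvector is $\vec{0}$, so this eigenvector of $K$ is ``lost'' when passing to $\nonback$. To turn ``lost eigenvector'' into ``different Jordan form'' I would invoke the multiplicity facts: for a unicyclic graph $\nonback$ is $2m\times 2m=2n\times 2n$ and $K$ is $2n\times 2n$, but $K$ has $\vec{1_n}\oplus(-\vec{1_n})$ as an honest eigenvector for $\lambda=1$ whereas the corresponding eigenspace does not survive in $\nonback$; one then counts algebraic/geometric multiplicities of $\pm1$ for $\nonback$ on a unicyclic graph (these are the ``predictable'' cycle eigenvalues — a single cycle contributes $\lambda=1$, and $\lambda=-1$ only when the cycle is even) and compares with those of $K$, which always has $\lambda=1$ with multiplicity one and $\lambda=-1$ with multiplicity one iff $G$ is bipartite. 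The mismatch in dimension of the $\lambda=1$ (resp.\ $\lambda=-1$) generalized eigenspace is precisely what forces the Jordan forms to differ, and only at those eigenvalues, since for every $\lambda\neq\pm1$ the lift map $\vec{v}^{(1)}\mapsto X\vec{v}^{(1)}$ is injective on eigenvectors (by Lemma \ref{lemma:xv eqaul 0}), so Proposition \ref{prop:EigenLift} carries every Jordan chain of $K$ faithfully to one of $\nonback$.

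The bipartite/non-bipartite split in the statement corresponds exactly to whether $-1$ is an eigenvalue of the probability transition matrix, which is the dichotomy already isolated in Lemma \ref{lemma: null space of transition matrix}: a unicyclic graph is bipartite iff its unique cycle is even, so when the cycle is odd only the $\lambda=1$ eigenvector $\begin{bmatrix}\vec{1_n}&-\vec{1_n}\end{bmatrix}^T$ lies in $\nul\begin{bmatrix}S&T^T\end{bmatrix}$, and when the cycle is even both null-space generators do, accounting for $\lambda=\pm1$. I would also need to check that these are genuine eigenvectors of $K$ and not merely generalized ones (a short direct computation: $K\begin{bmatrix}\vec{1}&-\vec{1}\end{bmatrix}^T = \begin{bmatrix}A\vec{1}-(D-I)\vec{1} & -\vec{1}\end{bmatrix}^T=\begin{bmatrix}\vec{1}&-\vec{1}\end{bmatrix}^T$ since $A\vec{1}=D\vec{1}$), so that the ``lost'' piece is exactly a Jordan block of size one — meaning $\nonback$ has strictly fewer copies of $\lambda=1$ (resp.\ $\pm1$) than $M$, hence a different Jordan form.

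The main obstacle I anticipate is the bookkeeping in step (b): one must be careful that losing an eigenvector of $K$ does not accidentally get compensated by a larger Jordan block elsewhere in the $\pm1$ generalized eigenspace of $\nonback$. Here Propositions \ref{torres1} and \ref{torres2} are not directly available (they assume at least two cycles), so I would instead either establish the unicyclic analogue directly — $\lambda=1$ has algebraic multiplicity one for $\nonback$ on a unicyclic graph, and $\lambda=-1$ has multiplicity one iff the cycle is even, both with trivial Jordan blocks — or argue via Ihara's Theorem (Theorem \ref{thm:ihara}) that with $m=n$ the factor $(1-u^2)^{m-n}=1$ disappears, so the $\pm1$-eigenstructure of $\nonback$ is governed entirely by $\det(u^2(D-I)-uA+I)$, i.e. by $K$, up to exactly the one (or two) dimensions identified by the null space of $X$. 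Reconciling these counts cleanly is the crux; everything else is assembling Proposition \ref{prop:EigenLift} and Lemmas \ref{lemma: null space of transition matrix}–\ref{lemma:xv eqaul 0}.
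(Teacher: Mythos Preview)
Your reduction to the unicyclic case via Theorem~\ref{thm: k and b are the same}, the identification $M=K$ when $m=n$, and the argument that for $\lambda\neq\pm1$ every Jordan chain of $K$ lifts faithfully to one of $\nonback$ (Proposition~\ref{prop:EigenLift} with Lemma~\ref{lemma:xv eqaul 0}) all match the paper. The gap is in the step where you must show the Jordan forms \emph{actually differ} at $\lambda=\pm1$. Your multiplicity bookkeeping is wrong: since $m=n$, Ihara gives $\det(xI-\nonback)=\det(xI-K)$, so $\nonback$ and $K$ have identical characteristic polynomials and hence identical algebraic multiplicities at every eigenvalue. In particular $\lambda=1$ has algebraic multiplicity $2$ (not $1$) for both matrices---it comes from the factor $(x^k-1)^2$ contributed by the $k$-cycle. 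Thus the claim that ``$\nonback$ has strictly fewer copies of $\lambda=1$ than $M$'' cannot hold, and the observation that $X\vec{v}=0$ for $\vec{v}=[\vec{1},-\vec{1}]^T$ does not by itself separate the Jordan structures. What must be established is that $K$ has one Jordan block of size $2$ at $\lambda=1$ while $\nonback$ has two blocks of size $1$.

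The paper supplies precisely the missing ingredient on the $K$ side: it \emph{constructs} an explicit generalized eigenvector $\vec{u}$ satisfying $(K-I)\vec{u}=\vec{v}$, namely $\vec{u}=[\vec{0}_k,\ \vec{x}_{n-k},\ \vec{1}_k,\ \vec{1}_{n-k}-\vec{x}_{n-k}]^T$ with $\vec{x}_i=-\dist(i,j)$ for $j$ a fixed cycle vertex, and verifies the chain equation by treating cycle and non-cycle vertices separately. This exhibits a nontrivial Jordan block for $K$ at $\lambda=1$. On the $\nonback$ side the paper uses that $\nonback(C_k)$ is block-diagonal with two permutation blocks (hence diagonalizable), together with Torres's factorization $p_G(x)=x^{2(n-k)}p_{C_k}(x)$, to conclude $\nonback$ has only trivial blocks at $\lambda=1$. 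An analogous signed-distance construction handles $\lambda=-1$ in the even-cycle case. This distance-to-cycle generalized eigenvector is the concrete idea your plan is missing; without it (or an equivalent device showing the geometric multiplicity of $\lambda=1$ in $K$ is $1$), the ``lost eigenvector'' observation is not enough to conclude.
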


\begin{proof}
    The only class of graphs that is not a tree and has less than two cycles, are the unicyclic graphs. We will now show that $\nonback$ and $M$ only differ for the eigenvalue $\lambda=1$ when the cycle is odd and for the eigenvalues $\lambda=\pm 1$ when the cycle is even.

    Torres in \cite{Torres} showed that the characteristic polynomial of a unicyclic graph of the non-backtracking matrix can be written as $p_G(x)=x^{2(n-k)}p_{C_k}(x)$ where $k$ is the size of the cycle. 
    The non-backtracking matrix of the cycle graph can be written as a block diagonal matrix with two of the diagonal blocks both being permutation matrices and the remaining ones. Therefore its eigenvalues are the $n^{th}$ roots of unity, each with multiplicity two and it has a full set of eigenvectors (since it is block diagonal).
    
    Therefore, the spectrum of the unicylic graph is $\{0^{(2(n-k))}, 2\cos(2 \pi j/k)^{(2)}\}$ for $0 \leq j \leq n-1$. Note that $\lambda=-1$ is only an eigenvalue when $n$ is even.

    In the proof of Theorem \ref{thm: k and b are the same}, we only used the fact that $G$ has at least two cycles to determine $M$ and $\nonback$ had the same Jordan form for $\lambda=\pm 1$. Therefore, these two eigenvalues are the only places where the Jordan canonical form of $\nonback$ and $M$ have the potential to differ. For the unicyclic graph, $M=K$ since it has the same number of vertices as edges. 

    For $\lambda=1$, the vector $\vec{v}=[\vec{1}_k, \vec{1}_{n-k}, -\vec{1}_k,  -\vec{1}_{n-k}]^T$ is an eigenvector for $K$. Consider $\vec{u}=[\vec{0}_k, \vec{x}_{n-k}, \vec{1}_k, \vec{1}_{n-k}-\vec{x}_{n-k}]^T$ where $\vec{x}_i=-\dist(i,j)$ where $j$ is some vertex on the cycle. We will show that $(K-I)\vec{u}=K\vec{u}-\vec{u}=\vec{v}$.
    
    First, consider a vertex $i$ that is on the cycle. Then 
    \begin{align*}
        K\vec{u}_i &= 0 + (\deg(i)-2)(-1)+(\deg(i)-1)\\
        &=1,\\
        (K\vec{u}-\vec{u})_i&=1-0=1\\
        (K\vec{u}-\vec{u})_{n+i}&=0+0-1=-1.
    \end{align*} Now let $i$ be a vertex that is not in the cycle. Note, there is only one vertex adjacent to $i$ that is distance one less to some vertex on the cycle $j$, the remaining $\deg(i)-1$ vertices are distance one more away from the cycle than $i$. Then
    \begin{align*}
        K\vec{u}_i &= (\deg(i)-1)(-1)(\dist(i,j)+1)-(\dist(i,j)-1)+(\deg(i)-1)(\dist(i,j)+1)\\
        &=1-\dist(i,j),\\
        (K\vec{u}-\vec{u})_i&=1-\dist(i,j)-(-\dist(i,j))=1\\
        (K\vec{u}-\vec{u})_{n+i}&=-(-\dist(i,j))-(\dist(i,j)+1=-1.
    \end{align*}
    Therefore, $\vec{u}$ is a generalized eigenvector for the eigenvalue $\lambda=1$. Therefore, $M$ and $\nonback$ have different Jordan blocks for $\lambda=1$. 

    For $\lambda=-1$, this is only an eigenvalue for a unicyclic graph when $k$ is even. Since the graph is bipartite, we can partition the vertices into sets $\mathcal{A},\mathcal{B}$ such that no edges are contained in $\mathcal{A}$ or contained in $\mathcal{B}$. Consider the vector $\vec{v}=[\vec{1}_{|\mathcal{A}|}, -\vec{1}_{|\mathcal{B}|},\vec{1}_{|\mathcal{A}|}, -\vec{1}_{|\mathcal{B}|}]^T$ which is an eigenvector for $\lambda$ for $K$. Consider \begin{align*}
        \vec{u}_i &=\begin{cases}
        -1 & i\in \mathcal{A} \text{ and in the cycle.}   \\
        1 & i\in \mathcal{B} \text{ and in the cycle.}  \\
        (\dist(i,j)-1) & i\in \mathcal{A} \text{ and not in the cycle.} \\
        -(\dist(i,j)-1) & i\in \mathcal{B} \text{ and not in the cycle.} 
    \end{cases}\\
    \vec{u}_{n+i} &=\begin{cases}
        0 & i\text{ is in the cycle.}   \\ 
        (\dist(i,j)) & i\in \mathcal{A} \text{ and not in the cycle.} \\
        -(\dist(i,j)) & i\in \mathcal{B} \text{ and not in the cycle.} 
    \end{cases}
    \end{align*} We will show that $(K+I)\vec{u}=K\vec{u}+\vec{u}=\vec{v}$.
    
    First, consider a vertex $i$ that is on the cycle and in $\mathcal{A}$ (if $i\in \mathcal{B}$, all the signs are reversed). Then 
    \begin{align*}
        (K\vec{u}+\vec{u})_i&=2-0-1=1\\
        (K\vec{u}+\vec{u})_{n+i}&=-(-1)+0=1.
    \end{align*} Now let $i$ be a vertex that is not in the cycle and in $\mathcal{A}$ (if $i\in \mathcal{B}$, all the signs are reversed). Note, there is only one vertex adjacent to $i$ that is distance one less to some vertex on the cycle $j$, the remaining $\deg(i)-1$ vertices are distance one more away from the cycle than $i$. Then
    \begin{align*}
        K\vec{u}_i &= (\deg(i)-1)(-1)(\dist(i,j))-(\dist(i,j)-2)+(\deg(i)-1)(\dist(i,j))\\
        &=2-\dist(i,j),\\
        (K\vec{u}+\vec{u})_i&=2-\dist(i,j)+(\dist(i,j)-1)=1\\
        (K\vec{u}-\vec{u})_{n+i}&=-(-\dist(i,j)-1)-(\dist(i,j)=1.
    \end{align*}
    
    Therefore, $\vec{u}$ is a generalized eigenvector for the eigenvalue $\lambda=-1$. Therefore, $M$ and $\nonback$ have different Jordan blocks for $\lambda=-1$ when $n$ is even. 
\end{proof}

With this result, we are able to look at a smaller matrix when investigating defective eigenvalues for the non-backtracking matrix. Further, if restricting to minimum degree at least two, $K$ and $\nonback$ are similar matrices making them spectrally interchangeable. For the class of unicyclic graphs, we completely understand the eigenspace differences between these two matrices. The only class of graphs not covered by our results is trees which are already spectrally categorized. We see this result as a major tool in future spectral study of the non-backtracking matrix. 

\section{Eigenvectors structure for $K$}\label{sec: structure for k}
In this section, we establish useful structural results about the eigenvectors and generalized eigenvectors for $K$. We see these results as being useful tools when showing that a generalized eigenvector does or does not exist. 

\subsection{Eigenvector and generalized eigenvectors} 
We begin by looking at the necessary and sufficient linear equations the values of a vector must fulfill in order to be an eigenvector of $K$. 
\begin{prop}
\label{prop:eqEigvec}
Let $\vec{v}$ be a vector in $\mathbb{C}^{2n}$ and let $\lambda\neq 0$. The vector $\vec{v}$ is an eigenvector of $K$ for the eigenvalue $\lambda$ if and only if $$\vec{v}_{n+i}=\frac{-\vec{v}_i}{\lambda}\, \text{ and } \sum_{j\sim i}\vec{v}_j=\vec{v}_i\frac{1}{\lambda}\left(\deg(i)-1+\lambda^2\right)$$ for all $1\leq i\leq n$.
\end{prop}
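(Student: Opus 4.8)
The plan is to split $\vec{v}$ into its top and bottom halves, $\vec{v}=\begin{bmatrix}\vec{x}^T & \vec{y}^T\end{bmatrix}^T$ with $\vec{x},\vec{y}\in\mathbb{C}^n$ (so $\vec{x}_i=\vec{v}_i$ and $\vec{y}_i=\vec{v}_{n+i}$), and then unpack the block eigenvalue equation $K\vec{v}=\lambda\vec{v}$ using $K=\begin{bmatrix}A & D-I_n\\ -I_n & 0\end{bmatrix}$. Carrying out the block multiplication, $K\vec{v}=\lambda\vec{v}$ is equivalent to the pair of vector equations $A\vec{x}+(D-I)\vec{y}=\lambda\vec{x}$ and $-\vec{x}=\lambda\vec{y}$.

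Next I would read off the two claimed identities from this pair. The second equation, together with $\lambda\neq 0$, gives $\vec{y}=-\vec{x}/\lambda$, which is exactly $\vec{v}_{n+i}=-\vec{v}_i/\lambda$ for all $i$. Substituting this expression for $\vec{y}$ into the first equation and clearing the denominator yields $\lambda A\vec{x}-(D-I)\vec{x}=\lambda^2\vec{x}$, i.e. $\lambda A\vec{x}=(\lambda^2 I+D-I)\vec{x}$. Taking the $i$th entry, and using $(A\vec{x})_i=\sum_{j\sim i}\vec{v}_j$ and $((D-I)\vec{x})_i=(\deg(i)-1)\vec{v}_i$, this becomes $\lambda\sum_{j\sim i}\vec{v}_j=(\deg(i)-1+\lambda^2)\vec{v}_i$, which is the second claimed identity.

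Finally, I would observe that every step above is reversible: given the two entrywise identities, the first reconstitutes $\vec{y}=-\vec{x}/\lambda$, which is the bottom block of $K\vec{v}=\lambda\vec{v}$, and substituting this back into the second identity recovers the top block $A\vec{x}+(D-I)\vec{y}=\lambda\vec{x}$; hence the two conditions together are equivalent to $K\vec{v}=\lambda\vec{v}$. There is no real obstacle here beyond careful index bookkeeping — the hypothesis $\lambda\neq 0$ is used only to divide by $\lambda$ when solving for $\vec{y}$ — and I would note in passing that, as elsewhere, "eigenvector" is understood to mean a nonzero vector (the displayed equations on their own are also satisfied by $\vec{v}=\vec{0}$).
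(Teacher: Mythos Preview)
Your proof is correct and follows essentially the same approach as the paper: both unpack the block equation $K\vec{v}=\lambda\vec{v}$ into its two row-block components, use the bottom block (with $\lambda\neq 0$) to obtain $\vec{v}_{n+i}=-\vec{v}_i/\lambda$, substitute this into the top block to derive the neighborhood-sum identity, and observe that the algebra is reversible. The only cosmetic difference is that you work in vector form before passing to coordinates, whereas the paper computes entrywise from the outset.
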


\begin{proof}
First assume $\vec{v}$ is an eigenvector of $K$ for the eigenvalue $\lambda$. Then $(K\vec{v})_i=\lambda \vec{v}_i$ for $1\leq i\leq 2n$. Furthermore, for $1\leq i\leq n$,
\begin{align*}
    (K\vec{v})_i&=(A\begin{bmatrix}\vec{v}_1&\cdots&\vec{v}_n\end{bmatrix}^T)_i+\left((D-I)\begin{bmatrix}\vec{v}_{n+1}&\cdots&\vec{v}_{2n}\end{bmatrix}^T\right)_i\\
    &=\sum_{j\sim i}\vec{v}_j+\vec{v}_{n+i}(\deg(i)-1)
\end{align*}
and
$$    (K\vec{v})_{n+i}=(-I\begin{bmatrix}\vec{v}_1&\cdots&\vec{v}_n\end{bmatrix}^T)_i=-\vec{v}_i.$$
Therefore
\begin{align}
    \lambda \vec{v}_i&=\sum_{j\sim i}\vec{v}_j+\vec{v}_{n+i}(\deg(i)-1) \hspace{5mm} \text{ and }\label{Eq:EigenVecUnsimp}\\
    \lambda \vec{v}_{n+i}&=-\vec{v}_i.\label{Eq:yx}
\end{align}
Solving Equation \ref{Eq:yx} for $\vec{v}_{n+i}$ and plugging it into Equation \ref{Eq:EigenVecUnsimp} and rearranging gives us 
$$\sum_{j\sim i}\vec{v}_j=\vec{v}_i\frac{1}{\lambda}\left(\deg(i)-1+\lambda^2\right),$$
as desired.

The other direction follows immediately from reversing the algebra. 
\end{proof}

We will proceed similarly to above but now looking at a generalized eigenvector. 
\begin{prop}
\label{prop:eqGenEigvec}
Let $\vec{v}$ be an eigenvector of $K$ for the eigenvalue $\lambda\not=0$ and let $\vec{u}$ be a vector in $\mathbb{C}^{2n}$. The vector $\vec{u}$ is a generalized eigenvector of $K$ such that $K\vec{u}=\lambda\vec{u}+\vec{v}$ if and only if $$\vec{u}_{n+i}=\frac{-\vec{u}_i}{\lambda}+\frac{\vec{v}_i}{\lambda^2} \, \text{ and } \, \sum_{j\sim i}\vec{u}_j=\vec{u}_i\frac{1}{\lambda}(\deg(i)-1+\lambda^2)-\vec{v}_i\frac{1}{\lambda^2}(\deg(i)-1-\lambda^2)$$
for all $1\leq i\leq n$.
\end{prop}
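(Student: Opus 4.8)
The plan is to mirror the proof of Proposition \ref{prop:eqEigvec} exactly, but now unraveling the defining relation $K\vec{u}=\lambda\vec{u}+\vec{v}$ component by component, using the block structure of $K=\begin{bmatrix} A & D-I \\ -I & 0\end{bmatrix}$. For the forward direction, I would first split the $2n$ scalar equations into the ``top'' block ($1\le i\le n$) and the ``bottom'' block ($n+1\le i\le 2n$). Exactly as in the eigenvector computation, for $1\le i\le n$ we have $(K\vec{u})_i=\sum_{j\sim i}\vec{u}_j+\vec{u}_{n+i}(\deg(i)-1)$ and $(K\vec{u})_{n+i}=-\vec{u}_i$, so the defining relation becomes the two families of equations
\begin{align*}
\sum_{j\sim i}\vec{u}_j+\vec{u}_{n+i}(\deg(i)-1)&=\lambda\vec{u}_i+\vec{v}_i,\\
-\vec{u}_i&=\lambda\vec{u}_{n+i}+\vec{v}_{n+i}.
\end{align*}

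The one extra ingredient compared to Proposition \ref{prop:eqEigvec} is that the right-hand side involves the components $\vec{v}_{n+i}$, and here I would invoke the hypothesis that $\vec{v}$ is an eigenvector of $K$ for $\lambda\neq 0$: by Proposition \ref{prop:eqEigvec} this gives $\vec{v}_{n+i}=-\vec{v}_i/\lambda$. Substituting this into the second equation and solving for $\vec{u}_{n+i}$ yields $\vec{u}_{n+i}=\frac{-\vec{u}_i}{\lambda}+\frac{\vec{v}_i}{\lambda^2}$, which is the first claimed identity. Plugging this expression for $\vec{u}_{n+i}$ into the first equation and collecting the coefficients of $\vec{u}_i$ and $\vec{v}_i$ gives
\begin{align*}
\sum_{j\sim i}\vec{u}_j=\vec{u}_i\Bigl(\lambda+\tfrac{\deg(i)-1}{\lambda}\Bigr)+\vec{v}_i\Bigl(1-\tfrac{\deg(i)-1}{\lambda^2}\Bigr)
=\vec{u}_i\tfrac{1}{\lambda}\bigl(\deg(i)-1+\lambda^2\bigr)-\vec{v}_i\tfrac{1}{\lambda^2}\bigl(\deg(i)-1-\lambda^2\bigr),
\end{align*}
which is the second claimed identity. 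For the converse, I would simply run this algebra backwards: given the two stated identities (and using $\vec{v}_{n+i}=-\vec{v}_i/\lambda$, again available since $\vec{v}$ is an eigenvector), one recovers $(K\vec{u})_i=\lambda\vec{u}_i+\vec{v}_i$ for the top block and $(K\vec{u})_{n+i}=-\vec{u}_i=\lambda\vec{u}_{n+i}+\vec{v}_{n+i}$ for the bottom block, so $K\vec{u}=\lambda\vec{u}+\vec{v}$.

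There is no real obstacle here; the proof is a routine elimination of the lower half of the vector, entirely parallel to Proposition \ref{prop:eqEigvec}. The only point that needs a moment's care is remembering that $\lambda\neq 0$ is needed to divide through (and to invoke Proposition \ref{prop:eqEigvec} for $\vec{v}$), and that the appearance of $\vec{v}_{n+i}$ must be rewritten via the eigenvector relation for $\vec{v}$ rather than treated as a free parameter — otherwise the formulas would not close up in terms of $\vec{u}_i$ and $\vec{v}_i$ alone.
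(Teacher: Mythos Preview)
Your proposal is correct and follows essentially the same approach as the paper: both split $K\vec{u}=\lambda\vec{u}+\vec{v}$ into top and bottom blocks, invoke $\vec{v}_{n+i}=-\vec{v}_i/\lambda$ from the eigenvector relation, solve the bottom block for $\vec{u}_{n+i}$, substitute into the top block, and then note that the algebra reverses for the converse.
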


\begin{proof}
First, assume $\vec{u}$ be a generalized eigenvector of $K$ such that $K\vec{u}=\lambda\vec{u}+\vec{v}$. Then for $1\leq i\leq n$,
\begin{align}\label{Eq:GenFirstHalf}
(K\vec{u})_i&=\sum_{j\sim i}\vec{u}_j+\vec{u}_{n+i}(\deg(i)-1)=\lambda \vec{u}_i+\vec{v}_i
\end{align}
and 
\begin{align}\label{Eq:GenSecondHalf}
(K\vec{u})_{i+n}&=-\vec{u}_i=\lambda \vec{u}_{n+i}+\vec{v}_{n+i}.
\end{align}
Recalling that $\vec{v}_{n+i}=\frac{-\vec{v}_i}{\lambda}$ and plugging this in to Equation \ref{Eq:GenSecondHalf} and rearranging yields
\begin{align*}
    -\vec{u}_i&=\lambda \vec{u}_{n+i}-\frac{\vec{v}_i}{\lambda}.
\end{align*}
Plugging this in to Equation \ref{Eq:GenFirstHalf}, we get
\begin{align*}
    \sum_{j\sim i}\vec{u}_j+\left(\frac{-\vec{u}_i}{\lambda}+\frac{\vec{v}_i}{\lambda^2}\right)(\deg(i)-1)&=\lambda \vec{u}_i+\vec{v}_i,\\
    \text{ so  } \, \, \sum_{j\sim i}\vec{u}_j&=\vec{u}_i\frac{1}{\lambda}(\deg(i)-1+\lambda^2)-\vec{v}_i\frac{1}{\lambda^2}(\deg(i)-1-\lambda^2),
\end{align*} as desired.

The other direction follows immediately from reversing the algebra. \end{proof}

\begin{cor}
\label{cor:veciff}
    Let $\vec{v}$ and $\vec{u}$ be vectors in $\mathbb{C}^{2n}$ and let $\lambda\neq 0$. The vectors $\vec{v}$ and $\vec{u}$ satisfy the claimed equations from Propositions \ref{prop:eqEigvec} and \ref{prop:eqGenEigvec} if and only if $(\lambda,\vec{v},\vec{u})$ forms an generalized eigenpair for the matrix $K$.
\end{cor}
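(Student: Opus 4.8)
The plan is to observe that Corollary \ref{cor:veciff} is essentially a repackaging of Propositions \ref{prop:eqEigvec} and \ref{prop:eqGenEigvec}, so the work is mostly bookkeeping about what ``generalized eigenpair'' means and checking that the two propositions together exhaust the relevant conditions. First I would make precise the claim: the assertion is that for $\lambda \neq 0$ and vectors $\vec{v}, \vec{u} \in \mathbb{C}^{2n}$, the pair of vector equations
\begin{align*}
\vec{v}_{n+i} &= \tfrac{-\vec{v}_i}{\lambda}, \qquad \sum_{j\sim i}\vec{v}_j = \vec{v}_i\tfrac{1}{\lambda}(\deg(i)-1+\lambda^2),\\
\vec{u}_{n+i} &= \tfrac{-\vec{u}_i}{\lambda}+\tfrac{\vec{v}_i}{\lambda^2}, \qquad \sum_{j\sim i}\vec{u}_j = \vec{u}_i\tfrac{1}{\lambda}(\deg(i)-1+\lambda^2)-\vec{v}_i\tfrac{1}{\lambda^2}(\deg(i)-1-\lambda^2)
\end{align*}
holding for all $1\le i\le n$ is equivalent to the conjunction ``$K\vec{v}=\lambda\vec{v}$, $\vec{v}\neq \vec{0}$, and $K\vec{u}=\lambda\vec{u}+\vec{v}$'' (this last being the definition of a generalized eigenpair in the two-term sense used throughout Section \ref{sec: structure for k}).

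For the forward direction, suppose the four displayed equations hold. By Proposition \ref{prop:eqEigvec}, the first two imply $K\vec{v}=\lambda\vec{v}$. I would then note $\vec{v}\neq\vec{0}$: if $\vec{v}=\vec{0}$ the generalized-eigenvector equations for $\vec{u}$ reduce to the eigenvector equations, so one only gets $K\vec{u}=\lambda\vec{u}$ and no genuine chain — so strictly one should either (a) read ``generalized eigenpair'' as allowing $\vec{v}=\vec0$ and $\vec u$ an eigenvector, or (b) tacitly assume $\vec v\neq\vec0$. I would adopt reading (a) to keep the ``if and only if'' clean, mirroring how $\vec{u}^{(0)}=\vec 0$ is handled in the Preliminaries, and remark that when $\vec v\neq \vec 0$ this is a genuine length-$\geq 2$ chain. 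Granting that $\vec{v}$ is an eigenvector (equations from Prop.\ \ref{prop:eqEigvec} hold, so in particular $\vec v_{n+i}=-\vec v_i/\lambda$), the remaining two equations are exactly the hypotheses of Proposition \ref{prop:eqGenEigvec}, whose conclusion is $K\vec{u}=\lambda\vec{u}+\vec{v}$. Hence $(\lambda,\vec v,\vec u)$ is a generalized eigenpair.

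For the reverse direction, suppose $K\vec{v}=\lambda\vec{v}$ and $K\vec{u}=\lambda\vec{u}+\vec{v}$ with $\lambda\neq 0$. Proposition \ref{prop:eqEigvec} (the ``only if'' half) gives the two equations for $\vec{v}$; and since $\vec{v}$ is then an eigenvector of $K$ for $\lambda\neq 0$, Proposition \ref{prop:eqGenEigvec} applies and its ``only if'' half gives the two equations for $\vec{u}$. Assembling the four equations completes the equivalence. I do not anticipate a real obstacle here — the only subtlety worth flagging in the writeup is the $\vec v = \vec 0$ convention above and the fact that Proposition \ref{prop:eqGenEigvec} is only quoted for $\lambda\neq 0$, which is exactly the hypothesis of the corollary, so the $\lambda=0$ case simply lies outside the statement. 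The ``main obstacle,'' such as it is, is purely expository: stating cleanly what structure $(\lambda,\vec v,\vec u)$ is claimed to be, since the corollary compresses two propositions into one bidirectional sentence.
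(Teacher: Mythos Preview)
Your proposal is correct and matches the paper's approach: the paper gives no separate proof of this corollary, treating it as an immediate restatement of Propositions~\ref{prop:eqEigvec} and~\ref{prop:eqGenEigvec} combined, which is exactly what you do. Your attention to the $\vec{v}=\vec{0}$ convention is a reasonable expository remark but not something the paper addresses.
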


Again, we have shown the necessary and sufficient conditions two vectors must have in order to be within a Jordan chain of each other. 

\subsection{Twins}
Now we will investigate the structure of the eigenvectors and generalized eigenvectors when we have a special graph structure. Two vertices in a graph $G$, $x$ and $y$ are called \emph{(adjacent) twins} if their neighborhoods in $G\backslash \{x,y\}$ are equivalent (and they are adjacent). Twins in graphs relate nicely to matrices because the two columns corresponding to the twins have identical entries (except for the $x,y$ principle submatrix). 

We will start by showing that if a graph has twins, we have a known eigenvector entries and a known eigenvalue.

\begin{prop}\label{Prop:NonTwinEigen}
Let $G$ be a graph on $n$ vertices containing a pair of (adjacent) twin vertices $x$ and $y$ such that $\deg(x)=\deg(y)=d$, and let $\vec{v}$ be a vector in $\mathbb{C}^{2n}$. Then the vector $\vec{v}$ with entries $\vec{v}_x=1$, $\vec{v}_{n+x}=\frac{-1}{\lambda}$, $\vec{v}_y=-1$, $\vec{v}_{n+y}=\frac{1}{\lambda}$, and $\vec{v}_k=\vec{v}_{n+k}=0$ for all $1\leq k\leq n$ such that $k\not\in\{x,y\}$ is an eigenvector of $K(G)$ for the eigenvalue $\lambda=\pm\sqrt{1-d}$ if $x,y$ are non-adjacent (if $x,y$ are adjacent, then $\lambda=\frac{-1\pm \sqrt{5-4d}}{2}$).
Furthermore, if $\vec{v}$ is an eigenvector of $K(G)$ for an eigenvalue $\lambda' \not\in \{\lambda,0\}$, then $\vec{v}_x=\vec{v}_y$.
\end{prop}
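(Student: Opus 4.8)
The plan is to reduce both halves of the statement to Proposition~\ref{prop:eqEigvec}, which says that for $\lambda \neq 0$ a vector $\vec{v}$ is an eigenvector of $K$ for $\lambda$ precisely when $\vec{v}_{n+i} = -\vec{v}_i/\lambda$ and $\sum_{j\sim i}\vec{v}_j = \vec{v}_i\frac{1}{\lambda}(\deg(i)-1+\lambda^2)$ for every $i \in \{1,\dots,n\}$. The twin hypothesis will enter only through the facts that $\deg(x)=\deg(y)=d$, that for $i \notin \{x,y\}$ we have $i \sim x$ if and only if $i \sim y$, and that the neighbor sums at $x$ and at $y$ differ over at most the single vertices $y$ and $x$ respectively.

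For the first claim, I would first observe that the given vector satisfies $\vec{v}_{n+i} = -\vec{v}_i/\lambda$ for all $i$ by inspection: both sides vanish unless $i \in \{x,y\}$, and for $i \in \{x,y\}$ this is exactly how the entries $\vec{v}_{n+x}$ and $\vec{v}_{n+y}$ were defined. It then remains to check the neighbor-sum equation in three cases. For $i \notin \{x,y\}$ the right-hand side is $0$, while the left-hand side picks up $\vec{v}_x = 1$ if $i \sim x$ and $\vec{v}_y = -1$ if $i \sim y$; since $x,y$ are twins these two events coincide, so the contributions cancel. For $i = x$, the sum over the common neighbors is $0$, so $\sum_{j\sim x}\vec{v}_j$ equals $0$ when $x \not\sim y$ and equals $\vec{v}_y = -1$ when $x \sim y$; equating with $\frac{1}{\lambda}(d-1+\lambda^2)$ yields $\lambda^2 = 1-d$ in the non-adjacent case and $\lambda^2 + \lambda + (d-1) = 0$ in the adjacent case, whose roots are $\pm\sqrt{1-d}$ and $\frac{-1\pm\sqrt{5-4d}}{2}$ respectively. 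The case $i = y$ is the same computation with every sign reversed and gives the identical condition on $\lambda$.

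For the second claim, let $\vec{v}$ be an eigenvector of $K$ for some $\lambda' \neq 0$. Writing the neighbor-sum equation of Proposition~\ref{prop:eqEigvec} at $i = x$ and at $i = y$ and subtracting, the common neighbors cancel on the left, leaving $0$ if $x \not\sim y$ and $\vec{v}_y - \vec{v}_x$ if $x \sim y$ (the surviving terms being the $j=y$ term $\vec{v}_y$ from the sum at $x$ and the $j=x$ term $\vec{v}_x$ from the sum at $y$), while the right-hand side is $(\vec{v}_x - \vec{v}_y)\frac{1}{\lambda'}(d-1+(\lambda')^2)$. After multiplying through by $\lambda' \neq 0$ and rearranging, this becomes $(\vec{v}_x-\vec{v}_y)\bigl(d-1+(\lambda')^2\bigr) = 0$ in the non-adjacent case and $(\vec{v}_x-\vec{v}_y)\bigl((\lambda')^2+\lambda'+(d-1)\bigr) = 0$ in the adjacent case. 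In each case the quadratic factor in $\lambda'$ is exactly the one whose roots are the special eigenvalue(s) $\lambda$ from the first claim, so the hypothesis $\lambda' \notin \{\lambda, 0\}$ forces $\vec{v}_x = \vec{v}_y$.

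I do not expect a genuine obstacle here: once Proposition~\ref{prop:eqEigvec} is in hand, the argument is bookkeeping. The points that need care are the adjacent/non-adjacent dichotomy — especially remembering that when $x \sim y$ the vertex $y$ itself is a neighbor of $x$ and so contributes $\vec{v}_y$ to $\sum_{j\sim x}\vec{v}_j$ — and verifying that the quadratic factor obtained in the second claim matches the correct family of special eigenvalues in each of the two cases.
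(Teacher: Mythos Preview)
Your proposal is correct and follows essentially the same route as the paper: both reduce everything to the characterization in Proposition~\ref{prop:eqEigvec}, verify the neighbor-sum equation case by case for the explicit vector, and for the second claim subtract the equations at $x$ and at $y$ to isolate a factor $(\vec{v}_x-\vec{v}_y)$ times the relevant quadratic in $\lambda'$. Your treatment of the adjacent/non-adjacent dichotomy matches the paper's, including the identification of the quadratic factor with the polynomial whose roots are the special eigenvalues.
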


\begin{proof}
First, let $\vec{v}_x=1$, $\vec{v}_{n+x}=\frac{-1}{\lambda}$, $\vec{v}_y=-1$, $\vec{v}_{n+y}=\frac{1}{\lambda}$, and $\vec{v}_k=\vec{v}_{n+k}=0$ for all $1\leq k\leq n$ such that $k\not\in\{x,y\}$. We will show that such a vector $\vec{v}$ is an eigenvector for $K(G)$ for the eigenvalue $\lambda$ by showing the equations in Proposition \ref{prop:eqEigvec} hold. It is obvious that $\vec{v}_{n+i}=\frac{-\vec{v}_i}{\lambda}$ for all $1\leq i\leq n$, so we turn our attention to the second equation. Since $\vec{v}_x$ and $\vec{v}_y$ correspond to non-adjacent twins, for each vertex $k$ in their neighborhood, $$\sum_{j\sim k}\vec{v}_j=1-1=0=\vec{v}_k.$$ For vertices not in their neighborhood, there are no non-zero terms in the equations. Finally, for the case $x,y$ are non-adjacent consider the equations for $\vec{v}_x$ and $\vec{v}_y$, 
$$\sum_{j\sim x}\vec{v}_j=0=1\frac{1}{\lambda}\left(d-1+\lambda^2\right) \, \text{ and } \, \sum_{k\sim y}\vec{v}_j=0=-1\frac{1}{\lambda}\left(d-1+\lambda^2\right).$$
Both of these equations hold when $\lambda=\pm\sqrt{1-d}$, so the described vector is an eigenvector of $K$ by Proposition \ref{prop:eqEigvec}.

For the case $x,y$ are adjacent consider the equations for $\vec{v}_x$ and $\vec{v}_y$,
$$\sum_{j\sim x}\vec{v}_j=-1=1\frac{1}{\lambda}\left(d-1+\lambda^2\right) \, \text{ and } \, \sum_{k\sim y}\vec{v}_k=1=-1\frac{1}{\lambda}\left(d-1+\lambda^2\right).$$
Both of these equations hold when $\lambda=-\frac{1}{2}\pm\frac{\sqrt{5-4d}}{2}$, so the described vector is an eigenvector of $K$ by Proposition \ref{prop:eqEigvec}.

Next, let $\vec{v}$ be an eigenvector of $K(G)$ for the eigenvalue $\lambda' \neq \lambda$. Applying Proposition \ref{prop:eqEigvec}, the equations corresponding to the vertices $x$ and $y$ are
$$\sum_{j\sim x}\vec{v}_j=\vec{v}_x\frac{1}{\lambda'}\left(d-1+(\lambda')^2\right) \, \text{ and } \, \sum_{k\sim y}\vec{v}_k=\vec{v}_y\frac{1}{\lambda'}\left(d-1+(\lambda')^2\right).$$
If the vertices are non-adjacent twins, $$\sum_{j\sim x}\vec{v}_j=\sum_{k\sim y}\vec{v}_k$$ and if the vertices are adjacent twins \begin{align*}
    \sum_{j\sim x}\vec{v}_j-\sum_{k\sim y}\vec{v}_k = \vec{v}_y-\vec{v}_x.
\end{align*}
Subtracting the two equations for the non-adjacent case we get 
\begin{align}\label{Eq:NonAdjTwinEig}
\vec{v}_x\frac{1}{\lambda'}\left(d-1+(\lambda')^2\right)&=\vec{v}_y\frac{1}{\lambda'}\left(d-1+(\lambda')^2\right).
\end{align} Since $\lambda'\not=\pm\sqrt{1-d}$, we have $d-1+(\lambda')^2\not=0$. Therefore, $\vec{v}_x=\vec{v}_y$.

Subtracting the two equations for the adjacent case we get
\begin{align}\label{Eq:AdjTwinEig}
\vec{v}_x\frac{1}{\lambda'}\left(d-1+(\lambda')^2+\lambda'\right)&=\vec{v}_y\frac{1}{\lambda'}\left(d-1+(\lambda')^2+\lambda'\right).
\end{align} 
Since $\lambda'\not=-\frac{1}{2}\pm\frac{\sqrt{5-4d}}{2}$, we have $d-1+(\lambda')^2+\lambda'\not=0$. Therefore, $\vec{v}_x=\vec{v}_y$, as desired.
\end{proof}

In the case that $K(G)$ has a defective eigenvalue $\lambda$ with eigenvector $\vec{v}$ and generalized eigenvector $\vec{u}$, we can further determine the affect of the non-adjacent twin structure on the entries of $\vec{v}$ and $\vec{u}$.

\begin{prop}\label{Prop:NonAdjTwinGenEigen}
Let $G$ be a graph containing a pair of (adjacent) twin vertices $x$ and $y$ such that $\deg(x)=\deg(y)=d$ and let $(\lambda,\vec{v},\vec{u})$ be an generalized eigenpair for the matrix $K(G)$ such that $\lambda\not=0$. If $\lambda=\pm\sqrt{1-d}$ and $x,y$ are non-ajdacent, then $\vec{v}_x=\vec{v}_y$ and if $\lambda' \not\in \{\lambda,0\}$, then $\vec{u}_x=\vec{u}_y$. If $\lambda=-\frac{1}{2}\pm\frac{\sqrt{5-4d}}{2}$ and $x,y$ are adjacent, then $\vec{v}_x=\vec{v}_y$ and if $\lambda' \not\in \{\lambda,0\}$, then $\vec{u}_x=\vec{u}_y$.
\end{prop}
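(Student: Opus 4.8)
The plan is to mimic the structure of the proof of Proposition~\ref{Prop:NonTwinEigen}, but now apply Proposition~\ref{prop:eqGenEigvec} to $\vec{u}$ in addition to Proposition~\ref{prop:eqEigvec} to $\vec{v}$. The claim about $\vec{v}_x = \vec{v}_y$ is not new: since $(\lambda,\vec{v},\vec{u})$ is a generalized eigenpair, $\vec{v}$ is an eigenvector of $K(G)$ for $\lambda$, and in the setting $\lambda = \pm\sqrt{1-d}$ (non-adjacent) or $\lambda = -\tfrac12 \pm \tfrac{\sqrt{5-4d}}{2}$ (adjacent) one checks directly from the eigenvector equations at $x$ and $y$ that $\vec{v}_x = \vec{v}_y$ (in fact both sides of those equations vanish because $d-1+\lambda^2 = 0$ resp.\ $d-1+\lambda^2+\lambda=0$, forcing $\sum_{j\sim x}\vec{v}_j = \sum_{k\sim y}\vec{v}_j$ up to the twin correction term, hence $\vec{v}_x=\vec{v}_y$). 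So the content is the statement about $\vec{u}$.

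First I would write down the two instances of the generalized-eigenvector equation from Proposition~\ref{prop:eqGenEigvec} at $i = x$ and $i = y$:
\begin{align*}
\sum_{j\sim x}\vec{u}_j &= \vec{u}_x\tfrac{1}{\lambda'}\bigl(d-1+(\lambda')^2\bigr) - \vec{v}_x\tfrac{1}{(\lambda')^2}\bigl(d-1-(\lambda')^2\bigr),\\
\sum_{k\sim y}\vec{u}_k &= \vec{u}_y\tfrac{1}{\lambda'}\bigl(d-1+(\lambda')^2\bigr) - \vec{v}_y\tfrac{1}{(\lambda')^2}\bigl(d-1-(\lambda')^2\bigr).
\end{align*}
Here I am writing $\lambda'$ for the defective eigenvalue, matching the statement; note $\lambda' \notin\{\lambda,0\}$ is the hypothesis (the eigenvalue of the generalized eigenpair is assumed different from the ``twin eigenvalue'' $\lambda$). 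Now subtract. On the left, in the non-adjacent case the twin hypothesis gives $\sum_{j\sim x}\vec{u}_j = \sum_{k\sim y}\vec{u}_k$, so the left side is $0$; in the adjacent case $\sum_{j\sim x}\vec{u}_j - \sum_{k\sim y}\vec{u}_k = \vec{u}_y - \vec{u}_x$ (exactly as in the eigenvector computation). On the right, the $\vec{v}$-terms cancel because $\vec{v}_x = \vec{v}_y$ (the part already established). What remains is, in the non-adjacent case,
\[
0 = (\vec{u}_x - \vec{u}_y)\tfrac{1}{\lambda'}\bigl(d-1+(\lambda')^2\bigr),
\]
and in the adjacent case
\[
\vec{u}_y - \vec{u}_x = (\vec{u}_x - \vec{u}_y)\tfrac{1}{\lambda'}\bigl(d-1+(\lambda')^2\bigr), \quad\text{i.e.}\quad 0 = (\vec{u}_x-\vec{u}_y)\tfrac{1}{\lambda'}\bigl(d-1+(\lambda')^2+\lambda'\bigr).
\]
These are precisely Equations~\eqref{Eq:NonAdjTwinEig} and~\eqref{Eq:AdjTwinEig} with $\vec{u}$ in place of $\vec{v}$. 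Since $\lambda' \neq \pm\sqrt{1-d}$ in the non-adjacent case we get $d-1+(\lambda')^2 \neq 0$, and since $\lambda' \neq -\tfrac12\pm\tfrac{\sqrt{5-4d}}{2}$ in the adjacent case we get $d-1+(\lambda')^2+\lambda' \neq 0$; in either case $\vec{u}_x = \vec{u}_y$, as claimed.

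I do not expect a genuine obstacle here — the argument is a near-verbatim lift of the proof of Proposition~\ref{Prop:NonTwinEigen}, with the extra $\vec{v}$-dependent term in Proposition~\ref{prop:eqGenEigvec} being killed by the already-known fact $\vec{v}_x = \vec{v}_y$. The one point requiring a sentence of care is justifying $\vec{v}_x = \vec{v}_y$ itself: this uses that $\vec{v}$ is an eigenvector for $\lambda$ and that, because $\lambda$ equals the specific twin eigenvalue, the coefficient $d-1+\lambda^2$ (resp.\ $d-1+\lambda^2+\lambda$) vanishes, so the subtracted eigenvector equations at $x$ and $y$ read $0 = 0$ on the right and hence force the left sides — and therefore $\vec{v}_x,\vec{v}_y$ through the twin relation — to agree. (Alternatively one can cite Proposition~\ref{Prop:NonTwinEigen} directly if the vanishing of that coefficient is what is meant there.) A minor bookkeeping issue is keeping the roles of $\lambda$ and $\lambda'$ straight throughout, since the statement overloads notation; I would fix $\lambda$ as the twin eigenvalue and $\lambda'$ as the defective one and never conflate them.
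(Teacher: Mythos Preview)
Your argument for the $\vec{u}_x=\vec{u}_y$ conclusion (the case where the eigenvalue of the generalized eigenpair is \emph{not} the twin eigenvalue) is correct and matches the paper's. The gap is in the other half: your justification that $\vec{v}_x=\vec{v}_y$ when the eigenvalue \emph{equals} the twin value $\pm\sqrt{1-d}$ (resp.\ $-\tfrac12\pm\tfrac{\sqrt{5-4d}}{2}$) does not work. You say the subtracted eigenvector equations at $x$ and $y$ ``read $0=0$ on the right and hence force the left sides \dots\ to agree''; but the left sides already agree automatically by the twin hypothesis, so the subtracted eigenvector equation is literally $0=0$ and carries no information about $\vec{v}_x-\vec{v}_y$. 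Indeed, Proposition~\ref{Prop:NonTwinEigen} explicitly exhibits an eigenvector at the twin eigenvalue with $\vec{v}_x=1$, $\vec{v}_y=-1$, so $\vec{v}_x=\vec{v}_y$ cannot possibly follow from eigenvector data alone.

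What actually forces $\vec{v}_x=\vec{v}_y$ here is the existence of the generalized eigenvector $\vec{u}$. The paper uses the very same subtracted generalized-eigenvector identity you wrote down,
\[
(\vec{u}_x-\vec{u}_y)\,\tfrac{d-1+\lambda^2}{\lambda}\;=\;(\vec{v}_x-\vec{v}_y)\,\tfrac{d-1-\lambda^2}{\lambda^2}
\]
(with an extra $+\lambda$ on the left in the adjacent case), and observes that at the twin eigenvalue the \emph{left} coefficient vanishes while the \emph{right} coefficient equals $-2$ (resp.\ $-2-1/\lambda$), which is nonzero; this forces $\vec{v}_x=\vec{v}_y$. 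In other words, both halves of the proposition come from the \emph{same} subtracted equation, read in opposite directions depending on which coefficient vanishes --- you had the equation in hand but applied it only one way.
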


\begin{proof}
Let $(\lambda,\vec{v},\vec{u})$ be an generalized eigenpair for the matrix $K(G)$. Applying Proposition \ref{prop:eqGenEigvec}, the equations corresponding to the vertices $x$ and $y$ are

$$\sum_{j\sim x}\vec{u}_j=\vec{u}_x\frac{1}{\lambda}(d-1+\lambda^2)-\vec{v}_x\frac{1}{\lambda^2}(d-1-\lambda^2) \text{ and } \sum_{k\sim y}\vec{u}_k=\vec{u}_y\frac{1}{\lambda}(d-1+\lambda^2)-\vec{v}_y\frac{1}{\lambda^2}(d-1-\lambda^2)$$

First, let $x,y$ be non-adjacent twins. Therefore $$\sum_{j\sim x}\vec{u}_j=\sum_{k\sim y}\vec{u}_k$$

and subtracting the two equations we get

\begin{align}\label{eq:genEigen}
   (\vec{u}_x-\vec{u}_y)\frac{(d-1+\lambda^2)}{\lambda}=(\vec{v}_x-\vec{v}_y)\frac{(d-1-\lambda^2)}{\lambda^2}.
\end{align}

If $\lambda=\pm\sqrt{1-d}$, then $\frac{(d-1+\lambda^2)}{\lambda}=0$. Note that in this case, $\frac{(d-1-\lambda^2)}{\lambda^2}=\frac{d-1-1+d}{1-d}=-2\not=0$, so it must be that $\vec{v}_x=\vec{v}_y$. If $\lambda\not=\pm\sqrt{1-d}$, $\frac{(d-1+\lambda^2)}{\lambda}\not=0$ and by Proposition \ref{Prop:NonTwinEigen}, $\vec{v}_x=\vec{v}_y$. Therefore, in this case, it must be that $\vec{u}_x=\vec{u}_y$.

Finally, let $x,y$ be adjacent twins. Therefore, $$\sum_{j\sim x}\vec{u}_j-\sum_{k\sim y}\vec{u}_k=\vec{u}_y-\vec{u}_x$$

and subtracting the two equations we get
\begin{align*}
   (\vec{u}_x-\vec{u}_y)\frac{(d-1+\lambda^2+\lambda)}{\lambda}=(\vec{v}_y-\vec{v}_x)\frac{(d-1-\lambda^2)}{\lambda^2}.
\end{align*}

If $\lambda=-\frac{1}{2}\pm\frac{\sqrt{5-4d}}{2}$, then $\frac{(d-1+\lambda^2+\lambda)}{\lambda}=0$. Note that in this case, 
$$\frac{(d-1-\lambda^2)}{\lambda^2}=\frac{1}{\lambda}\left(\frac{(d-1+\lambda^2+\lambda)}{\lambda} \right)-2-\frac{1}{\lambda}=-2-\frac{1}{\lambda}\not=0,$$ so it must be that $\vec{v}_x=\vec{v}_y$. If $\lambda\not=-\frac{1}{2}\pm\frac{\sqrt{5-4d}}{2}$, then $\frac{(d-1+\lambda^2+\lambda)}{\lambda}\not=0$ and by Proposition \ref{Prop:NonTwinEigen}, $\vec{v}_x=\vec{v}_y$. Therefore, in this case, it must be that $\vec{u}_x=\vec{u}_y$.
\end{proof}

These results helps us better understand the composition of the eigenvectors and generalized eigenvectors of $K$. We see these results being useful in showing that some vector is or is not an eigenvector or generalized eigenvector. 

\section{Defective Graph Families}\label{sec: fams}

In this section, we give some examples of infinite graph families which have a nontrivial Jordan block for some eigenvalue. We also provide computational results regarding the number of such graphs on ten or fewer vertices.

To define infinite families with nontrivial Jordan blocks, we will define the idea of \emph{graph gluing}. Consider a graph $G$ with vertex $x$ and a graph $H$ with a vertex $y$. We can construct a new graph $G\glue{x}{y} H$ by identifying $x$ and $y$ together. Specifically, \[V(G \glue{x}{y} H) = V(G) \cup V(H)\setminus \{y\}\] and \[E(G\glue{x}{y} H)=E(G)\cup E(H) \cup \{vx : vy \in E(H) \} \setminus \{vy \in E(H)\}\}.\]

If we would like to glue two graphs together at several vertices, we can define two lists of vertices $X=[x_1,x_2,\ldots x_k]$ and $Y=[y_1,y_2,\ldots y_k]$ where $x_i \in V(G)$ and $y_i \in V(H)$ for all $i$. Then $G\glue{X}{Y} H$ is the graph formed by gluing $x_i$ to $y_i$ for all $i$, ignoring any multiedges that might arise. See an example of this gluing in Figure~\ref{fig:gluing}.

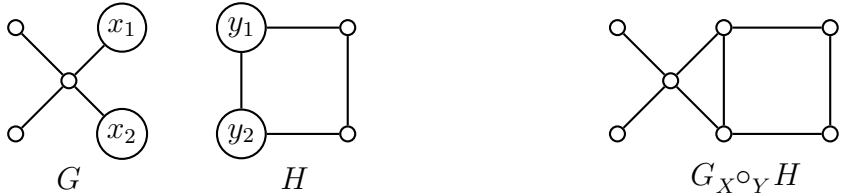
\begin{figure}[h!]
    \centering
\begin{tikzpicture}
\tikzstyle{knode}=[circle,draw=black,thick,inner sep=2pt]
\node (c1) at (-5,0) {};
\node (labelG) at ($(c1)+(0,-1.3)$) {$G$};
\node (c2) at (-2,0) {};
\node (labelH) at ($(c2)+(0,-1.3)$) {$H$};
\node (0a) at ($(c1)+(0:0cm)$) [knode] {};
\node (1a) at ($(c1)+(45:1cm)$) [knode] {\small $x_1$};
\node (2a) at ($(c1)+(135:1cm)$) [knode] {};
\node (3a) at ($(c1)+(-135:1cm)$) [knode] {};
\node (4a) at ($(c1)+(-45:1cm)$) [knode] {$x_2$};
\node (1b) at ($(c2)+(45:1cm)$) [knode] {};
\node (2b) at ($(c2)+(135:1cm)$) [knode] {$y_1$};
\node (3b) at ($(c2)+(-135:1cm)$) [knode] {$y_2$};
\node (4b) at ($(c2)+(-45:1cm)$) [knode] {};

\foreach \from/\to in {0a/1a,0a/2a,0a/3a,0a/4a, 1b/2b,2b/3b,4b/1b,3b/4b}
\draw[thick] (\from)--(\to);

\node (c3) at (3,0) {};
\node (labelGH) at ($(c3)+(1,-1.3)$) {$G \glue{X}{Y}H$};
\node (0a) at ($(c3)+(0:0cm)$) [knode] {};
\node (1a) at ($(c3)+(45:1cm)$) [knode] {};
\node (2a) at ($(c3)+(135:1cm)$) [knode] {};
\node (3a) at ($(c3)+(-135:1cm)$) [knode] {};
\node (4a) at ($(c3)+(-45:1cm)$) [knode] {};
\node (1b) at ($(1a)+(0:1.414cm)$) [knode] {};
%\node (2b) at ($(c4)+(135:1cm)$) [knode] {g};
%\node (3b) at ($(c4)+(-135:1cm)$) [knode] {h};
\node (4b) at ($(4a)+(0:1.414cm)$) [knode] {};

\foreach \from/\to in {0a/1a,0a/2a,0a/3a,0a/4a, 1b/1a,1a/4a,4b/1b,4a/4b}
\draw[thick] (\from)--(\to);
\end{tikzpicture}
    \caption{Two graphs $G$ and $H$ with indicated vertex lists $X$ and $Y$, respectively. Their glued result is on the right.}
    \label{fig:gluing}
\end{figure}

We now show that gluing any graph to a defective graph, under certain conditions, will result in a defective graph.

\begin{thm}\label{thm: gluing}
    Let $G$ be a graph with vertex $x$. Let $\vec{u}^{(1)}, \ldots, \vec{u}^{(k)}$ be a Jordan chain of $K(G)$ for the eigenvalue $\lambda\not=0$. For a graph $H$ with vertex $y$, let $G \glue{x}{y} H$ be the graph where $G$ and $H$ have been glued at $x$ and $y$, respectively.
    If $\vec{u}^{(i)}_x=0$ for all $i$, then $G \glue{x}{y} H$ has a Jordan chain of length $k$ for the eigenvalue $\lambda$ for the matrix $K(G\glue{x}{y} H)$.  
\end{thm}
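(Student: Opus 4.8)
The plan is to lift the given Jordan chain of $K(G)$ directly to a Jordan chain of $K(G \glue{x}{y} H)$ by padding the vectors with zeros on the $H$-coordinates, and then verify that the padded vectors satisfy the defining equations of a generalized eigenpair via the characterizations in Proposition~\ref{prop:eqEigvec} and Proposition~\ref{prop:eqGenEigvec}. Concretely, write $\widetilde{G} = G \glue{x}{y} H$ with vertex set $V(G) \cup V(H) \setminus \{y\}$, and for each $\vec{u}^{(i)} \in \mathbb{C}^{2|V(G)|}$ define $\widetilde{\vec{u}}^{(i)} \in \mathbb{C}^{2|V(\widetilde{G})|}$ to agree with $\vec{u}^{(i)}$ on the coordinates indexed by $V(G)$ (both the ``top'' and ``bottom'' blocks) and to be zero on all coordinates indexed by $V(H)\setminus\{y\}$. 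Since $\lambda \neq 0$, Corollary~\ref{cor:veciff} tells us it suffices to check that the pair of local equations from Propositions~\ref{prop:eqEigvec} and~\ref{prop:eqGenEigvec} hold at every vertex of $\widetilde{G}$, with degrees taken in $\widetilde{G}$.

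First I would handle the eigenvector $\vec{u}^{(1)}$. For a vertex $i \in V(G) \setminus \{x\}$, its neighborhood in $\widetilde{G}$ equals its neighborhood in $G$, its degree is unchanged, and $\widetilde{\vec{u}}^{(1)}$ vanishes outside $V(G)$, so the equation $\sum_{j\sim i}\widetilde{\vec{u}}^{(1)}_j = \widetilde{\vec{u}}^{(1)}_i \tfrac{1}{\lambda}(\deg_{\widetilde G}(i) - 1 + \lambda^2)$ is literally the equation that held in $G$. For a vertex $i \in V(H)\setminus\{y\}$, the hypothesis $\vec{u}^{(1)}_x = 0$ is exactly what is needed: every neighbor of $i$ in $\widetilde{G}$ is either in $V(H)\setminus\{y\}$ (where $\widetilde{\vec{u}}^{(1)}$ is zero) or is the glued vertex $x$ (where $\widetilde{\vec{u}}^{(1)}$ is also zero by hypothesis), so both sides of the equation are $0$. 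The only subtle vertex is $x = y$ itself: its degree in $\widetilde{G}$ is $\deg_G(x) + \deg_H(y)$ (after discarding multiedges, per the gluing definition), but since $\vec{u}^{(1)}_x = 0$ the right-hand side is $0$, and the left-hand side $\sum_{j \sim x}\widetilde{\vec{u}}^{(1)}_j$ splits as a sum over $G$-neighbors of $x$ plus a sum over $H$-neighbors of $y$; the latter is zero, and the former equals $\sum_{j\sim_G x}\vec{u}^{(1)}_j$, which equals $\vec{u}^{(1)}_x\tfrac{1}{\lambda}(\deg_G(x)-1+\lambda^2) = 0$ because $\vec{u}^{(1)}$ was an eigenvector of $K(G)$. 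The bottom-block equations $\widetilde{\vec{u}}^{(1)}_{n+i} = -\widetilde{\vec{u}}^{(1)}_i/\lambda$ are immediate from the corresponding equations for $\vec{u}^{(1)}$ together with the zero-padding.

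Next I would run the same case analysis for the generalized eigenvectors $\vec{u}^{(2)}, \ldots, \vec{u}^{(k)}$ using Proposition~\ref{prop:eqGenEigvec}: the equations at vertices of $V(G)\setminus\{x\}$ carry over verbatim, the equations at vertices of $V(H)\setminus\{y\}$ reduce to $0 = 0$ using both $\vec{u}^{(i)}_x = 0$ and $\vec{v}_x := \vec{u}^{(i-1)}_x = 0$, and the equation at $x$ uses $\vec{u}^{(i)}_x = \vec{u}^{(i-1)}_x = 0$ to kill both sides, after separating the $H$-neighbor contribution (which is zero) from the $G$-neighbor contribution (which satisfies the $G$-equation). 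This shows $\widetilde{\vec{u}}^{(1)}, \ldots, \widetilde{\vec{u}}^{(k)}$ is a Jordan chain of $K(\widetilde{G})$ of length at least $k$; since the chain cannot collapse (the $\widetilde{\vec{u}}^{(i)}$ inherit linear independence from the $\vec{u}^{(i)}$, as the $V(G)$-coordinates already separate them, and $\widetilde{\vec{u}}^{(1)} \neq 0$), it has length exactly $k$ in the sense required.

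The main obstacle I anticipate is bookkeeping at the glued vertex $x=y$: one must be careful about how $\deg_{\widetilde G}(x)$ relates to $\deg_G(x)$ and $\deg_H(y)$ when multiedges are suppressed (i.e., when $x$ and $y$ had a common neighbor, or more precisely when the gluing creates parallel edges that collapse), and one must confirm that the $H$-side neighborhood sum of $\widetilde{\vec{u}}^{(1)}$ is genuinely zero rather than merely ``mostly zero.'' Because $\widetilde{\vec{u}}^{(1)}$ is zero on \emph{all} of $V(H)\setminus\{y\}$, this sum is identically zero regardless of how the edges collapse, so the potential difficulty dissolves; the value $\deg_{\widetilde G}(x)$ never actually enters a nonzero computation precisely because it is always multiplied by $\widetilde{\vec{u}}^{(1)}_x = 0$. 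A secondary point worth stating carefully is that $\lambda \neq 0$ is used both to invoke Corollary~\ref{cor:veciff} and to make sense of the $1/\lambda$ and $1/\lambda^2$ factors; the theorem hypothesis supplies exactly this.
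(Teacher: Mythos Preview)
Your approach is essentially the same as the paper's: pad the Jordan chain with zeros on the $H$-coordinates and verify the equations vertex by vertex, handling $V(G)\setminus\{x\}$, $V(H)\setminus\{y\}$, and the glued vertex $x$ separately. Your treatment of the glued vertex is careful and correct, in particular the observation that $\deg_{\widetilde G}(x)$ only ever appears multiplied by $\widetilde{\vec u}^{(i)}_x=0$.

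There is one technical gap. Proposition~\ref{prop:eqGenEigvec} and Corollary~\ref{cor:veciff} are stated only for a pair $(\vec v,\vec u)$ in which $\vec v$ is an \emph{eigenvector}; the bottom-block identity there, $\vec u_{n+i}=-\vec u_i/\lambda+\vec v_i/\lambda^2$, uses $\vec v_{n+i}=-\vec v_i/\lambda$, which fails once $\vec v$ is itself a generalized eigenvector. So for $i\ge 3$ you cannot invoke Proposition~\ref{prop:eqGenEigvec} on the pair $(\vec u^{(i-1)},\vec u^{(i)})$. The paper avoids this by working directly with the raw relation $(K-\lambda I)\vec u^{(i)}=\vec u^{(i-1)}$, i.e.\ checking
\[
\sum_{k\sim j}\vec v^{(i)}_k+(\deg(j)-1)\vec v^{(i)}_{n'+j}=\lambda\vec v^{(i)}_j+\vec v^{(i-1)}_j
\quad\text{and}\quad
-\vec v^{(i)}_j=\lambda\vec v^{(i)}_{n'+j}+\vec v^{(i-1)}_{n'+j}
\]
at each vertex, and it begins by proving $\vec u^{(i)}_{n+x}=0$ for all $i$ by induction from $\vec u^{(i)}_x=0$ (this is what makes the degree change at $x$ harmless in the first displayed equation). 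Your argument is easily repaired by doing the same: drop the appeal to Proposition~\ref{prop:eqGenEigvec} for $i\ge 3$ and verify the defining Jordan-chain equations directly. For $k\le 2$ your write-up is already complete.
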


\begin{proof}
    Let $G$ be a graph with vertex $x$ and $\vec{u}^{(1)}, \ldots, \vec{u}^{(k)}$ be a Jordan chain of $K(G)$ for the eigenvalue $\lambda\not=0$ with $\vec{u}^{(i)}_x=0$ for all $1\leq i\leq k$. Since $\vec{u}^{(1)}$ is an eigenvector and $\lambda\not=0$, $\vec{u}^{(1)}_{n+x}=0$ by Proposition \ref{prop:eqEigvec}. Furthermore, since $-\vec{u}_x^{(i+1)}=\lambda\vec{u}_{n+x}^{(i+1)}+\vec{u}_{n+x}^{(i)}$ for all $1 \leq i< k$, it is clear by induction that $\vec{u}_{n+x}^{(i)}=0$ for all $1\leq i\leq k$.

    Consider $G \glue{x}{y} H$ for a graph $H$ with vertex $y$ and let $n'$ be the number of vertices in $G \glue{x}{y} H$. We construct $\vec{v}^{(i)}$ for $G \glue{x}{y} H$ such that $\vec{v}^{(i)}_j=\vec{u}^{(i)}_j$ and $\vec{v}^{(i)}_{n'+j}=\vec{u}^{(i)}_{n+j}$ for $j \in V(G)$ and $\vec{v}^{(i)}_j=\vec{v}^{(i)}_{n'+j}=0$ otherwise. That is, the new vectors will match the old vectors on $G$ and be zero on $H$. 

    For $G \glue{x}{y} H$, consider the $j$th entry of $K(G \glue{x}{y} H)\vec{v}^{(i)}$ which is $
        (\deg(j)-1)\vec{v}^{(i)}_{ n'+j}+\sum_{k \sim j} \vec{v}^{(i)}_k$, and the $(n'+j)$th entry is
    $ -\vec{v}^{(i)}_j$. 

To prove that $\{\vec{v}^{(i)}\}$ is a Jordan chain, we consider the vertices in two groups: vertices of $G$ that are not $x$, and vertices of $H$ including $x$. First, we will first show that for $j \in V(G)$ and $j \neq x$, 
\begin{equation}
\label{eq:liftG1}
    (\deg(j)-1)\vec{v}^{(i)}_{ n'+j}+\sum_{k \sim j} \vec{v}^{(i)}_k= \lambda \vec{v}^{(i)}_j +\vec{v}^{(i-1)}_j
\end{equation}
and
\begin{equation}
\label{eq:liftG2}
-\vec{v}^{(i)}_j=\lambda \vec{v}^{(i)}_{n'+j}+ \vec{v}^{(i-1)}_{n'+j}.    
\end{equation}

Then, we will also show for $j \in V(H)$ or $j=x$,
\begin{equation}
\label{eq:liftH1}
(\deg(j)-1)\vec{v}^{(i)}_{ n'+j}+\sum_{k \sim j} \vec{v}^{(i)}_k= \lambda \vec{v}^{(i)}_j +\vec{v}^{(i-1)}_j=0
\end{equation}
and
\begin{equation}
\label{eq:liftH2}
    -\vec{v}^{(i)}_j=\lambda \vec{v}^{(i)}_{n+j}+ \vec{v}^{(i-1)}_{n+j}=0.
\end{equation}

To prove equations \eqref{eq:liftG1} and \eqref{eq:liftG2}, let $j \in V(G)$ and $j \neq x$. Then, because the vectors $\vec{u}^{(i)}$ form a Jordan chain ,  
\begin{align*}
    (\deg(j)-1)\vec{v}^{(i)}_{ n'+j}+\sum_{k \sim j} \vec{v}^{(i)}_k&=(\deg(j)-1)\vec{u}^{(i)}_{ n+j}+\sum_{k \sim j} \vec{u}^{(i)}_k\\
    &=\lambda \vec{u}^{(i)}_j +\vec{u}^{(i-1)}_j =\lambda \vec{v}^{(i)}_j +\vec{v}^{(i-1)}_j,
\end{align*} and 
    \[-\vec{v}^{(i)}_j=-\vec{u}^{(i)}_j=\lambda \vec{u}^{(i)}_{n+j}+ \vec{u}^{(i-1)}_{n+j}=\lambda \vec{v}^{(i)}_{n'+j}+ \vec{v}^{(i-1)}_{n'+j}.\]

Now, to prove equations \eqref{eq:liftH1} and \eqref{eq:liftH2},  let $j \in V(H)$ or $j = x$. Then 
    \begin{align*}
    (\deg(j)-1)\vec{v}^{(i)}_{ n'+j}+\sum_{k \sim j} \vec{v}^{(i)}_k&=(\deg(j)-1)(0)+\sum_{k \sim j, \: k \in V(G)} \vec{v}^{(i)}_k+\sum_{k \sim j, \: k \in V(H)} \vec{v}^{(i)}_k\\
    &=\sum_{k \sim j, \: k \in V(G)} \vec{u}^{(i)}_k+\sum_{k \sim j, \: k \in V(H)} 0\\
    &=0\\
    &=\lambda \vec{v}^{(i)}_j +\vec{v}^{(i-1)}_j,
\end{align*} 
since $\sum_{k \sim x, k \in V(G)} \vec{u}^{(i)}_k=0$ by our assumption on the vectors $\{\vec{u}^{(i)}\}$. Further,  
\[-\vec{v}^{(i)}_j=0 =\lambda \vec{v}^{(i)}_{n'+j}+ \vec{v}^{(i-1)}_{n'+j}.\]
Therefore, we have constructed a Jordan chain of length $k$ for $\lambda$ for the matrix $K(G \glue{x}{y} H)$.
\end{proof}

\begin{cor}
\label{cor:gluing}
    Let $G$ be a graph with $X \subseteq V(G)$ such that for all $x \in X$ and all $i$, $\vec{u}^{(i)}_x=0$ where $\vec{u}^{(1)}, \ldots \vec{u}^{(k)}$ form a Jordan chain of $K(G)$ for the eigenvalue $\lambda\not=0$. Let $H$ be another graph with a subset of vertices $Y$ where $|X|=|Y|$. 
    
    Construct $G \glue{X}{Y} H$ by gluing the vertices $X$ to the vertices $Y$ as described. Then $G \glue{X}{Y} H$ has a Jordan chain of length $k$ for the eigenvalue $\lambda$ for the matrix $K(G \glue{X}{Y} H)$.
\end{cor}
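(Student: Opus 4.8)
The plan is to run the argument of Theorem~\ref{thm: gluing} once more, with the single glue vertex $x$ (and its partner $y$) replaced by the whole lists $X=[x_1,\dots,x_t]$ and $Y=[y_1,\dots,y_t]$; almost nothing changes. One might first hope to deduce the corollary by induction on $|X|$, peeling off one glued pair at a time, but after the first identification $x_1\leftrightarrow y_1$ the remaining identifications become \emph{self}-gluings inside a single graph, which Theorem~\ref{thm: gluing} does not address, so I would instead treat all of $X$ simultaneously.

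First I would isolate the two facts about the chain $\vec{u}^{(1)},\dots,\vec{u}^{(k)}$ that the proof actually uses. From $\vec{u}^{(i)}_{x_\ell}=0$ for all $i$ one gets, exactly as in the proof of Theorem~\ref{thm: gluing}, that $\vec{u}^{(i)}_{n+x_\ell}=0$ for all $i$ (using $\vec{u}^{(1)}_{n+x_\ell}=-\vec{u}^{(1)}_{x_\ell}/\lambda$ and then $-\vec{u}^{(i+1)}_{x_\ell}=\lambda\vec{u}^{(i+1)}_{n+x_\ell}+\vec{u}^{(i)}_{n+x_\ell}$ inductively), and hence the $x_\ell$-coordinate of $K(G)\vec{u}^{(i)}=\lambda\vec{u}^{(i)}+\vec{u}^{(i-1)}$ collapses to $\sum_{k\sim_G x_\ell}\vec{u}^{(i)}_k=0$; this is just Propositions~\ref{prop:eqEigvec} and~\ref{prop:eqGenEigvec} read at $x_\ell$. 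These two vanishing statements are needed at every $x_\ell\in X$, which is exactly what the hypothesis $\vec{u}^{(i)}_x=0$ for all $x\in X$ and all $i$ provides.

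Next I would define, on $G':=G\glue{X}{Y}H$ with $n'$ vertices, the extended vectors $\vec{v}^{(i)}$ by $\vec{v}^{(i)}_j=\vec{u}^{(i)}_j$ and $\vec{v}^{(i)}_{n'+j}=\vec{u}^{(i)}_{n+j}$ for $j\in V(G)$, and $\vec{v}^{(i)}_j=\vec{v}^{(i)}_{n'+j}=0$ for $j\in V(H)\setminus Y$, and then verify the Jordan-chain equations for $K(G')$ coordinate by coordinate, split into the same two groups as before. For $j\in V(G)\setminus X$ the neighborhood of $j$ in $G'$ equals its neighborhood in $G$ (new edges of $G'$ touch only $X$), so these equations are literally the $\vec{u}^{(i)}$ equations for $K(G)$. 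For $j=x_\ell\in X$ or $j\in V(H)\setminus Y$, both sides of both equations vanish: the term $(\deg(j)-1)\vec{v}^{(i)}_{n'+j}$ is $0$ since $\vec{v}^{(i)}_{n'+j}=0$, and $\sum_{k\sim j}\vec{v}^{(i)}_k$ splits into a $V(G)$-part equal to $\sum_{k\sim_G x_\ell}\vec{u}^{(i)}_k=0$ and a $V(H)$-part that is identically $0$ by construction. Concluding, $\vec{v}^{(1)},\dots,\vec{v}^{(k)}$ is a Jordan chain of length $k$ for $\lambda$ for $K(G')$.

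The verification is routine once these facts are in place; the one spot deserving care — and what I would flag as the main pitfall — is the glue vertices $x_\ell$, where multi-edges are discarded and $\deg_{G'}(x_\ell)$ is some opaque mixture of $\deg_G(x_\ell)$ and $\deg_H(y_\ell)$. The resolution is that $\vec{v}^{(i)}_{n'+x_\ell}=0$, so this degree never enters any equation, and the neighbor sum at $x_\ell$ only ever meets vertices on which $\vec{u}^{(i)}$ is already known to vanish (elements of $X$, including any reached through a discarded double edge) or vertices of $V(H)$ where $\vec{v}^{(i)}$ is $0$ by definition — so the delicate combinatorics of the gluing is invisible to the computation.
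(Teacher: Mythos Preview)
Your proposal is correct and is precisely what the paper does: the paper's ``proof'' of Corollary~\ref{cor:gluing} is the single sentence ``This is verified in the same manner as Theorem~\ref{thm: gluing},'' and you have carried out exactly that verification, with appropriate attention to the one place (the glue vertices $x_\ell$, where degrees change and multi-edges may be discarded) where the bookkeeping differs from the single-vertex case.
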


    This is verified in the same manner as Theorem \ref{thm: gluing}. 

\subsection{Bipartite Base Family}
 Let $\mathcal{F}_{n}$ be the family of graphs built through the following process:
\begin{enumerate}
    \item Start with $G$, a $4$-regular bipartite graph on $2n$ vertices, and let $\mathcal{A}=\{a_1,a_2,\dots,a_n\}$ and $\mathcal{B}=\{b_1,b_2,\dots,b_n\}$ be the partite sets.
    \item Let $H$ be a graph on $\ell\geq 1$ vertices. Take the disjoint union $G\cup H$.
    \item For all $1\leq i\leq n$, add an edge from $a_i\in \mathcal{A}$ to any single vertex in $H$ and from $b_i\in \mathcal{B}$ to any single vertex in $H$, such that each vertex in $H$ is adjacent to the same number of vertices in $\mathcal{A}$ and $\mathcal{B}$ and such that the resulting graph is connected.
\end{enumerate}

An immediate example of this construction is $K_{4,4}+K_1$, but another example is shown in Figure~\ref{fig:bipartite}.

\begin{figure}[h!]
    \centering
    \begin{tikzpicture}
        \tikzstyle{knode}=[circle,draw=black,thick,inner sep=2pt]
        \node (a0) at (-2,-1) [knode] {$a_1$};
        \node (a1) at (-1,-1) [knode] {$a_2$};
        \node (a2) at (0,-1) [knode] {$a_3$};
        \node (a3) at (1,-1) [knode] {$a_4$};
        \node (a4) at (2,-1) [knode] {$a_5$};
        \node (b0) at (-2,1) [knode] {$b_1$};
        \node (b1) at (-1,1) [knode] {$b_2$};
        \node (b2) at (0,1) [knode] {$b_3$};
        \node (b3) at (1,1) [knode] {$b_4$};
        \node (b4) at (2,1) [knode] {$b_5$};
        \foreach \from/\to in {0/1,0/2,0/3,0/4,1/2,1/3,1/4,2/3,2/4,3/4}{
        \draw[thick] (a\from)--(b\to);
        \draw[thick] (b\from)--(a\to);}
        \node (h1) at (-3,0) [knode] {$h_1$};
        \node (h2) at (3,0) [knode] {$h_2$};
        \draw[thick] (h1)--(h2);
        \draw[thick] (a0)--(h1)--(b0);
        \draw[thick] (a4)--(h2)--(b4);

        \draw[thick] (h1) to [out=120,in=120] (b3);
        \draw[thick] (h1) to [out=90,in=110] (b2);
        \draw[thick] (h1) to [out=-90,in=-100] (a1);
        \draw[thick] (h1) to [out=-130,in=-90] (a2);

        \draw[thick] (h2) to [out=90, in=90] (b1);
        \draw[thick] (h2) to [out=-90,in=-90] (a3);
        
    \end{tikzpicture}
    \caption{An graph in $\mathcal{F}_5$ constructed by (1) starting with $K_{5,5}$ minus one matching (specifically $a_ib_i$), (2) allowing $H=K_2$ and (3) adding an edges between $h_1$ and $\{a_1,a_2,a_3,b_1,b_3,b_4\}$ and $h_2$ and $\{a_4,a_5,b_2,b_5\}$.}
    \label{fig:bipartite}
\end{figure}
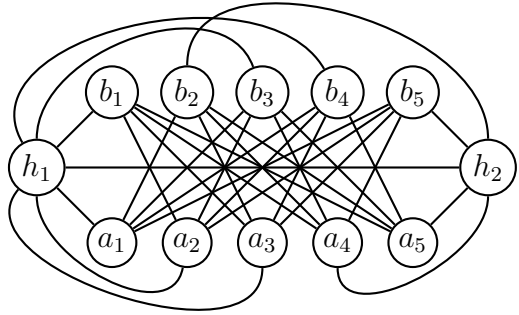

\begin{prop}
Any graph in $\mathcal{F}_n$ has a Jordan chain $(\lambda, \vec{v}, \vec{u})$ for $K$ for $\lambda=-2$, 
    \[\vec{v}_j=\left\{\hspace{-5pt} \begin{array}{cl} 1 & j \in \mathcal{A} \\ -1 & j \in \mathcal{B}   \\ 0 & \text{else} \end{array} \right. \hspace{.1 in} \vec{v}_{n'+j}=\left\{ \hspace{-5pt}\begin{array}{cl} -1/2 & j \in \mathcal{A} \\ 1/2 & j \in \mathcal{B}   \\ 0 & \text{else} \end{array} \right. \hspace{.1 in} \vec{u}_j=\left\{\hspace{-5pt} \begin{array}{cl} -1/2 & j \in \mathcal{A} \\ 1/2 & j \in \mathcal{B}   \\ 0 & \text{else} \end{array} \right. \hspace{.1 in} \vec{u}_{n'+j}=\left\{ \hspace{-5pt}\begin{array}{cl} 0 & j \in \mathcal{A} \\ 0 & j \in \mathcal{B}   \\ 0 & \text{else} \end{array} \right.\] 
    where $n'$ is the total number of vertices in the graph.
\end{prop}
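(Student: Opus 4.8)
The plan is to verify directly, using Corollary \ref{cor:veciff}, that the given triple $(\lambda,\vec{v},\vec{u})$ with $\lambda=-2$ satisfies the eigenvector equations of Proposition \ref{prop:eqEigvec} and the generalized eigenvector equations of Proposition \ref{prop:eqGenEigvec}. Since those conditions are both necessary and sufficient, checking them vertex by vertex suffices. The vertices split into three types: vertices in $\mathcal{A}$, vertices in $\mathcal{B}$, and vertices of $H$; by the symmetry of the construction it is enough to handle one representative of each (the $\mathcal{B}$ case being the negation of the $\mathcal{A}$ case).

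First I would check that $\vec{v}$ is an eigenvector. The relation $\vec{v}_{n'+j} = -\vec{v}_j/\lambda = \vec{v}_j/2$ is immediate from the stated formulas. For the neighbor-sum condition, take $j = a_i \in \mathcal{A}$: its four neighbors inside $G$ all lie in $\mathcal{B}$ (since $G$ is $4$-regular bipartite) contributing $4(-1) = -4$, and its one neighbor in $H$ contributes $0$, so $\sum_{k\sim a_i}\vec{v}_k = -4$. On the other side, $\vec{v}_{a_i}\cdot\frac{1}{\lambda}(\deg(a_i)-1+\lambda^2) = 1\cdot\frac{1}{-2}(5-1+4) = \frac{-8}{2} \cdot\frac12$—more carefully, $\deg(a_i)=5$, so $\deg(a_i)-1+\lambda^2 = 4+4 = 8$, and $\frac{1}{\lambda}\cdot 8 = -4$. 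These match. The $\mathcal{B}$ case is identical up to sign. For a vertex $h$ of $H$, by the construction it is adjacent to equally many vertices of $\mathcal{A}$ and of $\mathcal{B}$, so the contributions $+1$ and $-1$ cancel and $\sum_{k\sim h}\vec{v}_k = 0$; since $\vec{v}_h = 0$, the right-hand side is also $0$. Hence $\vec{v}$ is an eigenvector of $K$ for $\lambda = -2$ by Proposition \ref{prop:eqEigvec}.

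Next I would check the generalized eigenvector equations for $\vec{u}$. The first, $\vec{u}_{n'+j} = -\vec{u}_j/\lambda + \vec{v}_j/\lambda^2$, is verified directly: for $j\in\mathcal{A}$, $-(-1/2)/(-2) + 1/4 = -1/4 + 1/4 = 0 = \vec{u}_{n'+j}$, and similarly for $\mathcal{B}$ and for $H$. For the neighbor-sum condition $\sum_{k\sim j}\vec{u}_k = \vec{u}_j\frac1\lambda(\deg(j)-1+\lambda^2) - \vec{v}_j\frac{1}{\lambda^2}(\deg(j)-1-\lambda^2)$, take $j = a_i\in\mathcal{A}$ with $\deg(a_i) = 5$: the left side is $4\cdot(1/2) + 0 = 2$ (four $\mathcal{B}$-neighbors each contributing $1/2$, one $H$-neighbor contributing $0$); the right side is $(-1/2)\cdot\frac{1}{-2}\cdot 8 - 1\cdot\frac14\cdot(4-4) = 2 - 0 = 2$. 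Again the $\mathcal{B}$ case is the negation, and for $h\in V(H)$ both sides are $0$ because the $\mathcal{A}$- and $\mathcal{B}$-contributions to $\sum_{k\sim h}\vec{u}_k$ cancel (equal numbers of each, values $-1/2$ and $1/2$) and $\vec{u}_h = \vec{v}_h = 0$. By Proposition \ref{prop:eqGenEigvec}, $\vec{u}$ is a generalized eigenvector with $K\vec{u} = \lambda\vec{u} + \vec{v}$, so by Corollary \ref{cor:veciff} the triple $(\lambda,\vec{v},\vec{u})$ is a generalized eigenpair, i.e.\ $(\lambda,\vec{v},\vec{u})$ is a Jordan chain of length (at least) two for $K$.

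There is no serious obstacle here—the argument is a finite case check exploiting the $4$-regularity of $G$ and the balanced adjacency of each $H$-vertex to $\mathcal{A}$ and $\mathcal{B}$. The one point requiring a little care is bookkeeping the degree of each $a_i$ and $b_i$ in the final glued graph: each started $4$-regular in $G$ and acquired exactly one extra edge to $H$, so $\deg(a_i) = \deg(b_i) = 5$, and it is precisely the value $\deg(j)-1-\lambda^2 = 5-1-4 = 0$ that makes the $\vec{v}$-term in the generalized equation vanish and lets the computation close. I would also remark at the end that $\vec{v}$ and $\vec{u}$ are linearly independent (one has nonzero second-half entries where the other does not—indeed $\vec{v}_{n'+j}\neq 0$ but $\vec{u}_{n'+j} = 0$ for $j\in\mathcal{A}\cup\mathcal{B}$), so this Jordan chain is genuinely nontrivial and witnesses that $\lambda = -2$ is a defective eigenvalue of $K$ for every graph in $\mathcal{F}_n$.
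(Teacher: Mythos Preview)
Your approach is exactly the paper's: the paper's entire proof is the single sentence ``follows from Corollary~\ref{cor:veciff},'' and you have simply written out that verification case by case. One small slip worth flagging: the relation $\vec{v}_{n'+j}=-\vec{v}_j/\lambda=\vec{v}_j/2$ that you call ``immediate from the stated formulas'' actually yields $+1/2$ on $\mathcal{A}$ and $-1/2$ on $\mathcal{B}$, the opposite of what the displayed proposition records---this is a sign typo in the statement rather than a flaw in your method, and with the corrected signs every line of your check (and the top-block eigenvector equation, which also fails with the printed signs) goes through exactly as you wrote.
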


The proof of this proposition follows from Corollary~\ref{cor:veciff}.
\subsection{Crustacean Family}

Let $G$ be one of the graphs shown in Figure \ref{fig: crustacean}, and define $X$ for either graph to be the vertices labeled $\ast$. Both of these graphs can serve as the graph $G$ in Corollary~\ref{cor:gluing}, as we see below. It is interesting to note that graph (a) can be constructed from graph (b) by gluing the top $\ast$ vertex (adjacent to $0$ and $1$) to one of the other $\ast$ vertices.

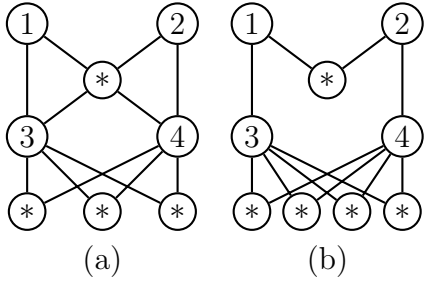
\begin{figure}[h!]
    \centering
    \begin{tabular}{cc}
       \begin{tikzpicture}
           \node[vertex] (v1) at (0,0) {3};
           \node[vertex] (v2) at (2,0) {4};
           \node[vertex] (v3) at (0,1.5) {1};
           \node[vertex] (v4) at (2,1.5) {2};
           \node[vertex] (v5) at (1,0.75) {$\ast$};
           \node[vertex] (v6) at (0,-1) {$\ast$};
           \node[vertex] (v7) at (1,-1) {$\ast$};
           \node[vertex] (v8) at (2,-1) {$\ast$};
           \draw[thick] (v1)--(v6)--(v2)--(v7)--(v1)--(v8)--(v2)--(v5)--(v1)--(v3)--(v5)--(v4)--(v2);
       \end{tikzpicture}  &  \begin{tikzpicture}
           \node[vertex] (v1) at (0,0) {3};
           \node[vertex] (v2) at (2,0) {4};
           \node[vertex] (v3) at (0,1.5) {1};
           \node[vertex] (v4) at (2,1.5) {2};
           \node[vertex] (v5) at (1,0.75) {$\ast$};
           \node[vertex] (v6) at (0,-1) {$\ast$};
           \node[vertex] (v7) at (.66,-1) {$\ast$};
           \node[vertex] (v8) at (1.33,-1) {$\ast$};
           \node[vertex] (v9) at (2,-1) {$\ast$};
           \draw[thick] (v1)--(v6)--(v2)--(v7)--(v1)--(v8)--(v2)--(v9)--(v1)--(v3)--(v5)--(v4)--(v2);
       \end{tikzpicture}\\
        (a) & (b) 
    \end{tabular}
    
    \caption{Two different base graphs with gluing set $\{\ast\}$.}
    \label{fig: crustacean}
\end{figure}

\begin{prop}
    Let $G$ be either of the graphs in Figure~\ref{fig: crustacean}. Then $(\lambda, \vec{v},\vec{u})$ is a Jordan chain for $K(G)$, where $\lambda=\sqrt{2}i$,
    \[\vec{v}_j=\left\{\hspace{-5pt} \begin{array}{cl} 1 & j=1 \\ -1 & j=2   \\ -\lambda/2 & j=3 \\ \lambda/2 & j=4 \\ 0 & \text{else}\end{array} \right. \hspace{.1 in} \vec{v}_{n'+j}=\left\{ \hspace{-5pt}\begin{array}{cl} \lambda/2 & j=1 \\ -\lambda/2 & j=2   \\ 1/2 & j=3 \\ -1/2 & j=4 \\ 0 & \text{else} \end{array} \right. \hspace{.1 in} \vec{u}_j=\left\{\hspace{-5pt}\begin{array}{cl} \lambda & j=1 \\ -\lambda & j=2   \\ -1/2 & j=3 \\ 1/2 & j=4 \\ 0 & \text{else} \end{array} \right. \hspace{.1 in} \vec{u}_{n'+j}=\left\{\hspace{-5pt} \begin{array}{cl} -3/2 & j=1 \\ 3/2 & j=2   \\ 0 & j=3 \\ 0 & j=4 \\ 0 &  \text{else}. \end{array} \right.\]
\end{prop}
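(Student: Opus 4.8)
The plan is to verify directly, using Corollary~\ref{cor:veciff}, that the given triple $(\lambda,\vec{v},\vec{u})$ with $\lambda=\sqrt{2}i$ satisfies the two systems of equations from Propositions~\ref{prop:eqEigvec} and~\ref{prop:eqGenEigvec}. That is, I will check that $\vec{v}$ obeys $\vec{v}_{n'+i}=-\vec{v}_i/\lambda$ and $\sum_{j\sim i}\vec{v}_j=\vec{v}_i\frac{1}{\lambda}(\deg(i)-1+\lambda^2)$ for every vertex $i$, and that $\vec{u}$ obeys $\vec{u}_{n'+i}=-\vec{u}_i/\lambda+\vec{v}_i/\lambda^2$ and $\sum_{j\sim i}\vec{u}_j=\vec{u}_i\frac{1}{\lambda}(\deg(i)-1+\lambda^2)-\vec{v}_i\frac{1}{\lambda^2}(\deg(i)-1-\lambda^2)$. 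Since $\lambda^2=-2$, these equations simplify considerably: for instance $\frac{1}{\lambda}(\deg(i)-1+\lambda^2)=\frac{\deg(i)-3}{\lambda}$ and $\frac{1}{\lambda^2}(\deg(i)-1-\lambda^2)=\frac{\deg(i)+1}{-2}$, so all coefficients become small rationals or rational multiples of $\lambda$, matching the form of the entries listed.

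First I would record the relevant degrees in the two graphs of Figure~\ref{fig: crustacean}: vertices $1,2,3,4$ all have degree $3$, the central $\ast$ vertex $v_5$ has degree $2$ (adjacent to $1$ and $2$, or in graph (a) to $3,4,1,2$ — I need to read the figure carefully here), and the bottom $\ast$ vertices have degree $2$, each adjacent to $3$ and $4$. Then I would split the verification into vertex classes. For the $\ast$ vertices, all of $\vec{v},\vec{u}$ entries are $0$, so the ``$n'+i$'' equations are trivial, and the neighbor-sum equations reduce to checking that $\sum_{j\sim i}\vec{v}_j=0$ and $\sum_{j\sim i}\vec{u}_j=0$; this holds because each bottom $\ast$ is adjacent to exactly $\{3,4\}$ with $\vec{v}_3+\vec{v}_4=-\lambda/2+\lambda/2=0$ and $\vec{u}_3+\vec{u}_4=-1/2+1/2=0$, and similarly the central $\ast$ sums the symmetric pair. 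For vertices $1,2,3,4$ I would plug in directly; by the obvious antisymmetry ($1\leftrightarrow 2$ and $3\leftrightarrow 4$ flips all signs) it suffices to check vertices $1$ and $3$. For vertex $1$: its neighbors are $3$, the central $\ast$, and (in graph (a)) another $\ast$, or more carefully whatever the figure dictates; I verify $\sum_{j\sim 1}\vec{v}_j = \vec{v}_3 + 0 = -\lambda/2$ equals $\vec{v}_1\frac{\deg(1)-3}{\lambda}=1\cdot 0 = 0$ — wait, this forces me to recount, since $-\lambda/2\neq 0$; so vertex $1$ must have a neighbor among $\{2,4\}$ as well, and indeed reading the edge list $(v_1)\text{--}(v_5)\text{--}(v_4)$ and so on, vertex $1$ is adjacent to $3$ (via $v_1\text{--}v_3$? no, $v_1$ is labeled ``3''), so I must be disciplined about the label-versus-node distinction. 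The careful bookkeeping of which labeled vertex is adjacent to which is exactly the step where care is needed.

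Concretely, the key steps in order are: (1) extract the adjacency structure and degree sequence from Figure~\ref{fig: crustacean} for both graphs (a) and (b), tracking the node-label correspondence precisely; (2) confirm the ``second-coordinate'' relations $\vec{v}_{n'+i}=-\vec{v}_i/\lambda$ and $\vec{u}_{n'+i}=-\vec{u}_i/\lambda+\vec{v}_i/\lambda^2$ hold entrywise, which is a short check using $1/\lambda = -\lambda/2$ and $1/\lambda^2=-1/2$; (3) verify the neighbor-sum equations for $\vec{v}$ at vertices $1,2,3,4$ and at the $\ast$ vertices; (4) verify the neighbor-sum equations for $\vec{u}$ at the same vertices; (5) note that $(\lambda,\vec{v},\vec{u})$ being a genuine Jordan chain (as opposed to merely satisfying the equations) follows from Corollary~\ref{cor:veciff}, and that $\vec{u}$ is not itself an eigenvector since $\vec{v}\neq\vec{0}$. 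The main obstacle is purely organizational rather than conceptual: the neighbor lists of the degree-$3$ vertices in graph~(a) versus graph~(b) differ (one bottom $\ast$ of (b) is split off in (a)), so the sums $\sum_{j\sim i}\vec{v}_j$ and $\sum_{j\sim i}\vec{u}_j$ must be checked for both configurations, and the only nonzero contributions come from neighbors in $\{1,2,3,4\}$, so I expect each sum to collapse to a combination like $\vec{v}_3+\vec{v}_4$ or $\vec{v}_1+\vec{v}_3$ that matches the required right-hand side — but getting the adjacencies exactly right from the TikZ code is where an error would most easily creep in.
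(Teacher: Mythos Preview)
Your approach is essentially the paper's: the paper's proof says only that it ``uses Corollary~\ref{cor:veciff}, Corollary~\ref{cor:gluing}, and straightforward algebra,'' which is precisely the direct entrywise verification of the equations from Propositions~\ref{prop:eqEigvec} and~\ref{prop:eqGenEigvec} that you outline. The one difference is that the paper also cites Corollary~\ref{cor:gluing} (presumably to deduce one of the two base graphs from the other, since graph~(a) is obtained from graph~(b) by identifying two $\ast$ vertices), whereas you plan to check both graphs separately; either route is fine, and your observation that the only real difficulty is carefully extracting the adjacencies from the figure is exactly right.
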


The proof of this claim uses Corollary~\ref{cor:veciff}, Corollary~\ref{cor:gluing}, and straightforward algebra.

\begin{cor}
    Let $G$ be either of the graphs in Figure~\ref{fig: crustacean}. Then $(\lambda, \vec{v},\vec{u})$ is a Jordan chain for $K(G)$, where $\lambda=-\sqrt{2}i$ and $\vec{v}$ and $\vec{u}$ are defined as above.
\end{cor}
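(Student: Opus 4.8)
The plan is to leverage the fact that $K(G)$ is a real matrix, so its characteristic polynomial and generalized eigenspaces come in complex-conjugate pairs. Concretely, if $(\lambda, \vec{v}, \vec{u})$ is a Jordan chain for $K(G)$ — meaning $K\vec{v} = \lambda \vec{v}$ and $K\vec{u} = \lambda \vec{u} + \vec{v}$ — then applying complex conjugation entrywise and using that $K$ has real entries gives $K\overline{\vec{v}} = \overline{\lambda}\,\overline{\vec{v}}$ and $K\overline{\vec{u}} = \overline{\lambda}\,\overline{\vec{u}} + \overline{\vec{v}}$, so $(\overline{\lambda}, \overline{\vec{v}}, \overline{\vec{u}})$ is again a Jordan chain of $K(G)$. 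Since $\overline{\sqrt{2}i} = -\sqrt{2}i$, this immediately produces a Jordan chain for the eigenvalue $\lambda = -\sqrt{2}i$.

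The one subtlety is that the corollary asserts the chain for $-\sqrt{2}i$ uses the \emph{same} vectors $\vec{v}$ and $\vec{u}$ as stated in the preceding proposition, not their conjugates. So the second step is to check that the claimed vectors are in fact fixed (up to the sign flip in $\lambda$) in the right way — i.e. that plugging $\lambda = -\sqrt{2}i$ into the formulas of the proposition yields exactly the entrywise conjugates of the vectors obtained with $\lambda = \sqrt{2}i$. Looking at the formulas, the entries of $\vec{v}$ and $\vec{u}$ that do not involve $\lambda$ (the $\pm 1$, $\pm 1/2$, $\pm 3/2$, and $0$ entries) are real and hence unchanged under conjugation, while the entries written as $\pm\lambda/2$ or $\pm\lambda$ flip sign precisely when $\lambda$ flips sign; thus the vectors "defined as above" with $\lambda = -\sqrt{2}i$ are literally the conjugates of the vectors with $\lambda = \sqrt{2}i$. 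Combining this observation with the conjugation argument finishes the proof.

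Alternatively — and this is essentially what the paper's phrase "uses Corollary~\ref{cor:veciff}" suggests — one can simply re-run the verification directly: by Corollary~\ref{cor:veciff}, it suffices to check that $\vec{v}$ and $\vec{u}$ with $\lambda = -\sqrt{2}i$ satisfy the equations of Propositions \ref{prop:eqEigvec} and \ref{prop:eqGenEigvec}. Since $\lambda^2 = -2$ for both $\lambda = \sqrt{2}i$ and $\lambda = -\sqrt{2}i$, and since the only places $\lambda$ (rather than $\lambda^2$) enters the vector formulas are linear, every equation that held for $\sqrt{2}i$ holds for $-\sqrt{2}i$ after the consistent sign change; for the graphs obtained by gluing, Corollary~\ref{cor:gluing} then extends the chain. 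The main (and only) obstacle is bookkeeping: making sure the sign conventions in the $\lambda$-dependent entries of $\vec{v}$ and $\vec{u}$ are tracked consistently so that the conjugate chain matches the stated formulas exactly. I would present the conjugation argument as the clean path, since it reduces the whole corollary to a one-line symmetry observation plus a trivial check that real entries are conjugation-invariant.
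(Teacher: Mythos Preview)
Your proposal is correct and matches the paper's approach exactly: the paper's entire proof is the single line ``This follows naturally for complex eigenvalues of real-valued matrices,'' which is precisely your conjugation argument. Your additional care in verifying that the formulas with $\lambda=-\sqrt{2}i$ substituted in yield the entrywise conjugates of the $\lambda=\sqrt{2}i$ vectors is a detail the paper leaves implicit.
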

This follows naturally for complex eigenvalues of real-valued matrices. 

\subsection{Restricted Diamonds Family}
Our last family explains two of the three defective graphs on seven vertices (the third being the cycle $C_7$) and again makes use of Corollary~\ref{cor:gluing}.

\begin{figure}[h!]
    \centering
        \begin{tikzpicture}
           \draw[thick] (-1,-1) arc (180:90:2);
           \node[vertex] (v1) at (0,0) {$4$};
           \node[vertex] (v2) at (0,-2) {$5$};
           \node[vertex] (v3) at (-1,-1) {$\ast$}; %attach
           \node[vertex] (v4) at (1,-1) {$\ast$}; %attach
           \node[vertex] (v5) at (0,2) {$1$};
           \node[vertex] (v6) at (-1,1) {$2$}; 
           \node[vertex] (v7) at (1,1) {$3$};  %attach         
           \draw[thick] (v2)--(v1)--(v3)--(v2)--(v4)--(v1)--(v6)--(v5)--(v7)--(v6);
           \draw[thick] (v7)--(v1);
           \draw[thick] (v7)--(v4);
           \end{tikzpicture}
    \caption{Restricted diamonds base graph with gluing set $\{\ast\}$}
    \label{fig:RestDiamond}
\end{figure}
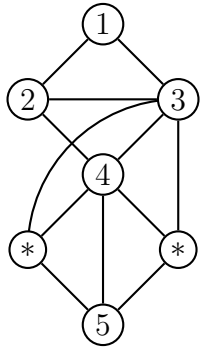

\begin{prop}
\label{prop:RestDiamond} Let $G$ be the graph in Figure~\ref{fig:RestDiamond}. Then there exists a Jordan chain $(\lambda,\vec{v},\vec{u})$ for $\lambda=\frac{-1-\sqrt{7}i}{2}$
\begin{align*}
    \vec{v}_j &=\left\{\hspace{-5pt} \begin{array}{cl} \lambda & j=1 \\ 2/\lambda & j=2   \\ 0 & j=3 \\ 1 & j=4 \\ -1 & j=5 \\0 & \text{else}\end{array} \right. \hspace{0.6 in} \vec{v}_{n'+j}=\left\{ \hspace{-5pt}\begin{array}{cl} -1 & j=1 \\ -2/\lambda^2 & j=2   \\ 0 & j=3 \\ -1/\lambda & j=4 \\ 1/\lambda & j=5 \\0 & \text{else}\end{array} \right.\\
    \vec{u}_j&=\left\{ \hspace{-5pt} \begin{array}{cl} 1 & j=1 \\ (5\lambda-3)/2 & j=2   \\ (3-\lambda)/2 & j=3 \\ -7(\lambda+1)/2 & j=4 \\ 4\lambda+2 & j=5 \\0 & \text{else}\end{array} \right.  \vec{u}_{n'+j}=\left\{\hspace{-5pt} \begin{array}{cl} 0 & j=1 \\ -(\lambda+5)/2 & j=2   \\ (3\lambda+5)/4 & j=3 \\ 3(1-\lambda)/2 & j=4 \\ (3\lambda-11)/4 & j=5 \\0 & \text{else}.\end{array} \right.
\end{align*}

\end{prop}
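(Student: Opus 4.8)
The plan is to verify directly that the stated triple $(\lambda, \vec{v}, \vec{u})$ satisfies the defining equations of a generalized eigenpair for $K(G)$, using Corollary~\ref{cor:veciff}. That corollary reduces the claim to checking two families of scalar identities: the eigenvector equations from Proposition~\ref{prop:eqEigvec}, namely $\vec{v}_{n'+i} = -\vec{v}_i/\lambda$ and $\sum_{j\sim i}\vec{v}_j = \vec{v}_i\frac{1}{\lambda}(\deg(i)-1+\lambda^2)$, and the generalized eigenvector equations from Proposition~\ref{prop:eqGenEigvec}, namely $\vec{u}_{n'+i} = -\vec{u}_i/\lambda + \vec{v}_i/\lambda^2$ and $\sum_{j\sim i}\vec{u}_j = \vec{u}_i\frac{1}{\lambda}(\deg(i)-1+\lambda^2) - \vec{v}_i\frac{1}{\lambda^2}(\deg(i)-1-\lambda^2)$, for every vertex $i$. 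Since $\vec{v}$ and $\vec{u}$ are supported only on vertices $1$ through $5$ and (by construction) on the gluing vertices $\ast$ where they vanish, the verification at the $\ast$ vertices and at any vertices outside $\{1,\dots,5\}$ reduces to confirming that the neighbor-sums over the support come out to zero — which is exactly the hypothesis needed to invoke Corollary~\ref{cor:gluing} so that the chain survives gluing. So the real content is the finite check at vertices $1,2,3,4,5$.

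First I would record, from Figure~\ref{fig:RestDiamond}, the degrees and neighborhoods of vertices $1$ through $5$: vertex $1$ is adjacent to $2$, $3$, and the two $\ast$ vertices on its side (reading off the drawn edges $v_1$–$v_2$... wait, re-reading, $v_5=1$ is adjacent to $v_6=2$ and $v_7=3$), vertex $4$ (that is $v_1$) is adjacent to $5$, both $\ast$'s, $2$, and $3$, etc. With $\lambda = \frac{-1-\sqrt{7}i}{2}$ one has the convenient relations $\lambda^2 + \lambda + 2 = 0$, hence $\lambda^2 = -\lambda - 2$ and $1/\lambda = -(\lambda+1)/2$; I would substitute these to keep every expression linear in $\lambda$. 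Then for each $i \in \{1,\dots,5\}$ I would (a) check $\vec{v}_{n'+i} = -\vec{v}_i/\lambda$ — e.g. for $i=1$, $-\lambda/\lambda = -1 = \vec{v}_{n'+1}$, etc.; (b) compute $\sum_{j\sim i}\vec{v}_j$ from the neighborhood list and check it equals $\vec{v}_i\frac{1}{\lambda}(\deg(i)-1+\lambda^2)$, simplifying $\deg(i)-1+\lambda^2$ using $\lambda^2 = -\lambda-2$; (c) check $\vec{u}_{n'+i} = -\vec{u}_i/\lambda + \vec{v}_i/\lambda^2$; and (d) compute $\sum_{j\sim i}\vec{u}_j$ and check the generalized equation. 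The vanishing of $\vec{v}$ and $\vec{u}$ at the $\ast$ vertices must be used in these neighbor-sums, and I would separately note that the $\ast$-vertex and off-support equations hold precisely because each such vertex sees neighbor-sum zero among $\{1,\dots,5\}$ — this is the gluing-compatibility condition.

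The step I expect to be the genuine obstacle is part (d), the generalized-eigenvector neighbor-sum identity at vertices $2$, $3$, $4$, $5$, where the entries of $\vec{u}$ involve fractions like $(5\lambda-3)/2$, $-7(\lambda+1)/2$, $(3\lambda+5)/4$, and the right-hand side mixes a $\vec{u}_i$ term with a $\vec{v}_i$ term carrying a $1/\lambda^2$ factor. Clearing denominators (multiply through by $4\lambda^2$, say) and repeatedly reducing via $\lambda^2 = -\lambda - 2$ is the error-prone part; it is pure algebra but there is a lot of it, and the asymmetry between the two $\ast$ vertices on vertex $1$'s side versus vertex $4$'s side (and the extra edge $v_7$–$v_4$, i.e. $3\sim 4$) means the neighborhoods are not symmetric, so one cannot shortcut by a sign-flip argument the way one can between, say, the $\mathcal{A}$ and $\mathcal{B}$ sides in the bipartite family. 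Once these finitely many identities check out, Corollary~\ref{cor:veciff} gives that $(\lambda,\vec{v},\vec{u})$ is a generalized eigenpair of $K(G)$, i.e. $G$ has a Jordan chain of length (at least) two for $\lambda$, completing the proof; and then Corollary~\ref{cor:gluing} applied with $X = \{\ast\}$ yields the announced infinite family.
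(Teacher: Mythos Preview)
Your proposal is correct and matches the paper's approach exactly: the paper's proof is simply the one-line remark that the claim follows from Corollary~\ref{cor:veciff}, Corollary~\ref{cor:gluing}, the relation $\lambda^2+\lambda+2=0$, and straightforward algebra. You have laid out precisely that verification in more detail, including the key simplification $\lambda^2=-\lambda-2$ (equivalently $2/\lambda=-\lambda-1$) and the observation that the $\ast$-vertex equations hold because the neighbor-sums into $\{1,\dots,5\}$ vanish there, which is what makes Corollary~\ref{cor:gluing} applicable.
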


The proof of this claim uses Corollary~\ref{cor:veciff}, Corollary~\ref{cor:gluing}, the fact that $\lambda$ satisfies $x^2+x+2=0$, and straightforward algebra.

\begin{cor}
    Let $G$ be the graph in Figure~\ref{fig:RestDiamond}. Then there exists a Jordan chain $(\lambda,\vec{v},\vec{u})$ for $\lambda=\frac{-1+\sqrt{7}i}{2}$ and $\vec{v}$ and $\vec{u}$ are  defined as above.
\end{cor}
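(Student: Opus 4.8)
The final statement is the Corollary following Proposition~\ref{prop:RestDiamond}: it asserts that for the same base graph $G$ in Figure~\ref{fig:RestDiamond}, the conjugate value $\lambda = \frac{-1+\sqrt{7}i}{2}$ also yields a Jordan chain $(\lambda, \vec v, \vec u)$ for $K(G)$, with $\vec v$ and $\vec u$ "defined as above."

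\medskip

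The plan is to invoke the general principle that the matrix $K(G)$ has real entries, so its characteristic polynomial and generalized-eigenspace structure are invariant under complex conjugation. Concretely: since $G$ is a real graph, $A$ and $D$ are real matrices, hence $K(G) = \begin{bmatrix} A & D-I \\ -I & 0 \end{bmatrix}$ is a real $2n' \times 2n'$ matrix. If $(\lambda, \vec v, \vec u)$ is a generalized eigenpair, meaning $K\vec v = \lambda \vec v$ and $K\vec u = \lambda \vec u + \vec v$ with $\vec v \neq \vec 0$, then applying entrywise complex conjugation to both equations and using that $K$ is real gives $K\overline{\vec v} = \overline\lambda\, \overline{\vec v}$ and $K\overline{\vec u} = \overline\lambda\, \overline{\vec u} + \overline{\vec v}$, with $\overline{\vec v} \neq \vec 0$. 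Thus $(\overline\lambda, \overline{\vec v}, \overline{\vec u})$ is a generalized eigenpair. Since $\frac{-1+\sqrt 7 i}{2} = \overline{\left(\frac{-1-\sqrt 7 i}{2}\right)}$, the Corollary follows immediately from Proposition~\ref{prop:RestDiamond} by taking conjugates.

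\medskip

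The only subtlety — and the thing worth stating carefully — is the meaning of "$\vec v$ and $\vec u$ are defined as above" in the Corollary: taken literally, the formulas in Proposition~\ref{prop:RestDiamond} involve $\lambda$ as a symbol, so "defined as above" should be read as \emph{using the same formulas but with the new value of $\lambda$ substituted}, which is exactly $\overline{\vec v}$ and $\overline{\vec u}$ of the original chain (one checks, e.g., that entries like $2/\lambda$, $-7(\lambda+1)/2$, $(5\lambda-3)/2$ are sent to their conjugates when $\lambda \mapsto \overline\lambda$ and the rational coefficients are real). I would make this explicit in one sentence so the reader is not confused about which vectors are meant. There is essentially no obstacle here: the entire content is the remark that a real matrix has conjugate-closed (generalized) eigendata, together with the observation that the two values $\frac{-1 \pm \sqrt 7 i}{2}$ are a conjugate pair. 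This is why the paper's one-line justification — "this follows naturally for complex eigenvalues of real-valued matrices" — is adequate, and my proof would be essentially that sentence expanded to name the conjugation argument on the pair of defining equations $K\vec v = \lambda\vec v$, $K\vec u = \lambda \vec u + \vec v$.
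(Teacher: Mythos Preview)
Your proposal is correct and matches the paper's own one-line justification (``this follows naturally for complex eigenvalues of real-valued matrices''): you simply unpack that remark by conjugating the defining equations $K\vec v=\lambda\vec v$ and $K\vec u=\lambda\vec u+\vec v$ for the real matrix $K$. Your added sentence clarifying that ``defined as above'' means substituting the new $\lambda$ into the same symbolic formulas (hence obtaining $\overline{\vec v},\overline{\vec u}$) is a helpful explicitation but not a departure from the paper's argument.
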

This follows naturally for complex eigenvalues of real-valued matrices.

\subsection{Computational Results}
We now provide some computational results for defective graphs for $K$ with minimum degree at least two. Table~\ref{tab:total} shows that the existence of defective eigenvalues for these graphs is relatively rare. Recall that the cycle is the only graph which is degenerate for $K$ and not $\nonback$, so the count of defective graphs for $\nonback$ is one less than shown.

\begin{table}[H]
    \centering
\begin{tabular}{|c||c|c|c|c|}
\hline
Number of vertices & 7 & 8 & 9 & 10 \\ \hline \hline
Defective & 3 & 39 & 484 & 7280  \\ \hline
Total & 507 & 7442 & 197772 & 9808209 \\ \hline
\end{tabular}
    \caption{The number of connected graphs with minimum degree two on $n=7,8,9,10$ vertices and the number those graphs which are defective for $K$.}
    \label{tab:total}
\end{table}

Table~\ref{tab:comp data} shows all values that manifest as defective eigenvalues for a graph on nine or fewer vertices, along with the number of defective graphs. We also give partial data for graphs on ten vertices for those same eigenvalues. This table omits 116 different graphs on ten vertices with other defective eigenvalues.

\begin{table}[ht]
    \centering
    \begin{tabular}{|c||r|r|r|r|}
    \hline
    \diagbox{$\downarrow \lambda$}{$n\rightarrow$} & $7$ & $8$ & $9$ & $10$ \\
    \hline
    \hline
        $1$ & 1 & 1 & 1&1 \\
    \hline
    $-1$ & & 1 & &1 \\
    \hline
    $(-1 \pm \sqrt{7}i)/2$ & $2$ & $16$ & $156$& 1918\\
    \hline
    $\pm \sqrt{2}i$ &  & $22$ & $324$& 5063 \\
    \hline
    $\pm \sqrt{3}i$ &  &  & $3$ & 183\\
    \hline
    $-2$ &  &  & $1$ & 5\\
     \hline
    \end{tabular}
    \caption{The number of graphs on $n$ vertices with $\lambda$ as a defective eigenvalue for $K$, where the minimum degree of the graph is at least two. Note that some graphs have multiple defective eigenvalues and appear more than once.}
    \label{tab:comp data}
\end{table}

It is interesting to note that of the graphs listed in Table~\ref{tab:comp data}, only one graph has a Jordan block of size larger than two (see Figure~\ref{fig:10 diamonds conclusion}). Defective eigenvalues of $\pm 1$ are the cycle graphs of that order, as explained by Theorem~\ref{thm:unicyclic}. Six different graphs are counted in Table~\ref{tab:comp data} twice: cycle graphs $C_8$ and $C_{10}$ (eigenvalues $\pm 1$) and four graphs on 10 vertices (eigenvalues: $\pm \sqrt{2}i$ and $(-1\pm\sqrt{7}i)/2$). 

Finally, with regards to the constructions discussed in this section, we note the following:
\begin{itemize}
\item Of the graphs with defective eigenvalue $-2$, the graph on nine vertices and four of the five graphs on ten vertices are constructed from Bipartite Base Family.

\item Of the graphs with defective eigenvalues $\pm \sqrt{2}i$, 20 of the 22 graphs on eight vertices and 250 of the 324 graphs on nine vertices (160 from Graph (a), 90 from Graph (b)) are constructed from the Crustacean Family.

\item Of the graphs with defective eigenvalues $(-1\pm\sqrt{7}i)/2$, both graphs on seven vertices are constructed by Restricted Diamonds Family, as well as two of the sixteen graphs on eight and ten of the 156 graphs on nine vertices.
\end{itemize}

SageMath code used to generate these results is available upon request. 

\section{Conclusion}
In this paper, we showed that exploring the Jordan form of the non-backtracking matrix is equivalent to exploring the Jordan form of the matrix $K$ for graphs with at least two cycles. Moreover, for other graphs, the differences between $\nonback$ and $K$ can be easily qualified. As such, we can reduce the often larger matrix $\nonback$ into a more tractable matrix $K$. We also have built algebraic techniques to find Jordan chains and constructed three graph families which will have defective eigenvalues.

A future direction we propose is considering the diamonds graph shown in Figure~\ref{fig:10 diamonds conclusion}a. 
\begin{figure}[h!]
    \centering
    \begin{tabular}{ccc}
       \begin{tikzpicture}
    \node (l) at (0,0) {};

           \node[vertex] (v1) at ($(l)+(0,0)$) {};
           \node[vertex] (v2) at ($(l)+(0,-2)$) {};
           \node[vertex] (v3) at ($(l)+(-1,-1)$) {};
           \node[vertex] (v4) at ($(l)+(1,-1)$) {};
           \node[vertex] (v5) at ($(l)+(0,2)$) {};
           \node[vertex] (v6) at ($(l)+(-1,1)$) {};
           \node[vertex] (v7) at ($(l)+(1,1)$) {};

           \draw[thick] (v2)--(v1)--(v3)--(v2)--(v4)--(v1)--(v6)--(v5)--(v7)--(v6);
           \draw[thick] (v7)--(v1);

       \end{tikzpicture}  & \hspace{.1 in} &  \begin{tikzpicture}
    \node (r) at (0,0) {};
          
           \node[vertex] (v1) at ($(r)+(0,0)$) {};
           \node[vertex] (v2) at ($(r)+(0,-2)$) {};
           \node[vertex] (v3) at ($(r)+(-1,-1)$) {};
           \node[vertex] (v4) at ($(r)+(1,-1)$) {};
           \node[vertex] (v5) at ($(r)+(0,2)$) {};
           \node[vertex] (v6) at ($(r)+(-1,1)$) {};
           \node[vertex] (v7) at ($(r)+(1,1)$) {};
           \node[vertex] (u1) at ($(r)+(2,1)$) {};
           \node[vertex] (u2) at ($(r)+(2,-1)$) {};
           \node[vertex] (u3) at ($(r)+(3,0)$) {};
           \draw[thick] (v2)--(v1)--(v3)--(v2)--(v4)--(v1)--(v6)--(v5)--(v7)--(v6);
           \draw[thick] (v7)--(v1);
           \draw[thick] (v7)--(u1)--(u3)--(u2)--(v4)--(u1);
           \draw[thick] (v7) to [out=-90,in=0] (v3);
       \end{tikzpicture}\\
       (a)  & & (b)
    \end{tabular}
   
    \caption{(a) The diamonds graph and (b) a graph on $10$ vertices with $\lambda= \frac{-1 \pm \sqrt{7}i}{2}$ as a defective eigenvalue with Jordan block size of $3$. This is the only graph with such a block of order ten or less.}
    \label{fig:10 diamonds conclusion}
\end{figure}
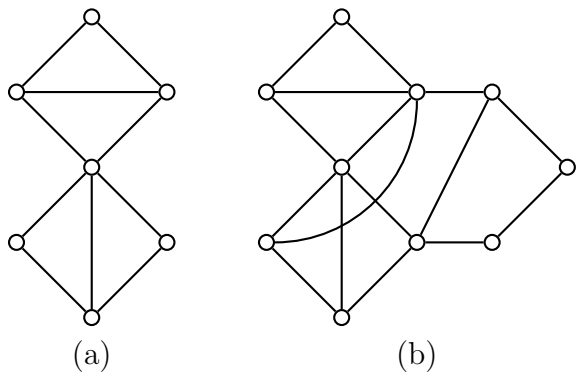
This graph on seven vertices appears as a subgraph in both defective graphs of order seven, and fifteen of the sixteen defective graphs on order eight. We also note that this subgraph appears in the only graph on ten or fewer vertices with a Jordan block of size three or more (see Figure~\ref{fig:10 diamonds conclusion}b). It is interesting to note that the diamonds graph is not itself defective, but does have $(-1\pm\sqrt{7} i)/2$ as eigenvalue (with algebraic and geometric multiplicity two) and the eigenvector in Proposition~\ref{prop:RestDiamond} is in the eigenspace. We are interested in a more comprehensive explanation of graphs with this diamonds subgraph, in particular graphs that include larger Jordan blocks.

\section*{Acknowledgements}

We would like to thank Xinyu Wu and Jane Breen for helpful discussions with regards to the infinite families of defective graphs in Section \ref{sec: fams}.

This project started at the American Institute of Mathematics workshop on Spectral Graph and Hypergraph Theory: Connections and Applications, which took place in December 2021 with support from the National Science Foundation and the Fry Foundation.

This material is also based upon work supported by the National Science Foundation under Grant No. DMS-1928930 and the National Security Agency under Grant No. H98230-23-1-0004 while the authors participated in a program hosted by the Simons Laufer Mathematical Sciences Institute (formerly Mathematical Sciences
Research Institute) in Berkeley, California, during the summer of 2023.

\end{document}